\documentclass[sn-mathphys-num]{sn-jnl}
\usepackage[dvipsnames]{xcolor}

\usepackage{mathtools}
\usepackage{booktabs}
\usepackage{pgfplots}
\usepgfplotslibrary{groupplots}
\usepgfplotslibrary{fillbetween}
\pgfplotsset{
	every mark/.append style={solid},
}
\usepackage{siunitx}
\usepackage{amsmath}
\usepackage{amsthm}
\usepackage{amsfonts}
\usepackage{dsfont}
\usepackage{multicol}
\usepackage{multirow}

\usepackage{caption}
\usepackage{subcaption}

\usepackage{listings}

\lstdefinelanguage{Julia}%
{morekeywords={abstract,break,case,catch,const,continue,do,else,elseif,%
		end,export,false,for,function,immutable,import,importall,if,in,%
		macro,module,otherwise,quote,return,switch,true,try,type,typealias,%
		using,while},%
	sensitive=true,%
	alsoother={\$},%
	morecomment=[l]\#,%
	morecomment=[n]{\#=}{=\#},%
	morestring=[s]{"}{"},%
	morestring=[m]{'}{'},%
}[keywords,comments,strings]%

\definecolor{codegreen}{rgb}{0,0.6,0}
\definecolor{codegray}{rgb}{0.5,0.5,0.5}
\definecolor{codepurple}{rgb}{0.58,0,0.82}
\definecolor{backcolour}{rgb}{0.95,0.95,0.92}

\lstdefinestyle{mystyle}{
    backgroundcolor=\color{backcolour},   
    commentstyle=\color{codegreen},
    keywordstyle=\color{magenta},
    numberstyle=\tiny\color{codegray},
    stringstyle=\color{codepurple},
    basicstyle=\ttfamily\footnotesize,
    breakatwhitespace=false,
    breaklines=true,
    captionpos=b,
    keepspaces=true,
    numbers=left,
    numbersep=5pt,
    showspaces=false,
    showstringspaces=false,
    showtabs=false,
    tabsize=2,
    lineskip=2pt    
}

\title[Stagnation in floating point]{Analysis and Detection of Stagnation in Floating-Point Summation}

\DeclareMathOperator{\fl}{\mathrm{fl}}
\DeclareMathOperator{\ulp}{\mathrm{ulp}}
\DeclareMathOperator{\R}{\mathbb{R}}
\DeclareMathOperator{\F}{\mathbb{F}}

\theoremstyle{definition}
\newtheorem{definition}{Definition}
\theoremstyle{theorem}
\newtheorem{theorem}{Theorem}
\newtheorem{proposition}[theorem]{Proposition}

\keywords{floating-point arithmetic, summation, error analysis}
 
\begin{document}

\author[1]{\fnm{Mantas} \sur{Mikaitis}}
\author*[1]{\fnm{William} \sur{Woolfenden}}\email{rcvw5971@leeds.ac.uk}

\affil[1]{\orgdiv{School of Computer Science}, \orgname{University of Leeds}, \orgaddress{\city{Leeds}, \country{United Kingdom}}}

\abstract{
  \emph{Stagnation} in the summation of floating-point numbers occurs when the running sum, or the accumulator, does not change its value despite addends being greater than zero.
  Here we define different types of stagnation---commonly referred to as \emph{swamping} or \emph{absorption}, where one operand in an addition is nearly or completely rounded off---in floating-point summation, and propose algorithms for detecting it in numerical code that uses floating-point arithmetic.
  Whilst \emph{cancellation} in floating-point arithmetic has been studied, for example by Lam et al.~\cite{lhs13}, stagnation has not been exploited as a property for detecting and reporting accuracy loss in floating-point operations.
  We have built prototype dynamic stagnation detection tools in two languages, MATLAB and Julia, and applied them to applications ranging from basic summation of a vector, to matrix multiplication, to a fluid flow simulation model.
  With this tool, users are able to identify areas of their floating-point code where stagnation occurs---in those areas of the code one may need to switch to a higher precision number format, apply stochastic rounding, or use different summation algorithms such as compensated summation.
  To the best of our knowledge we provide a first rigorous definition of stagnation in floating-point summation.
}

\maketitle


\section{Introduction}

Take $\F_{<e_{min}, e_{max}, p>} \subset \R$ to be an IEEE 754~\cite{ieee19} compliant binary floating-point subset of the reals, where $e_{min}$ is the minimum exponent, $e_{max}$ is the maximum exponent, and $p$ is the precision.
Each value in this set is represented as $\pm 2^em$ where $e_{min} \leq e \leq e_{max}$ and $0 \leq m \leq 1.1\dots1_2$ is represented using $p$ binary digits.
We will use $\F$ as a shorthand for  $\F_{<e_{min}, e_{max}, p>}$, unless we need several different floating-point formats in our discussion.

Given $a, b \in \F$ such that $|a| \geq |b|$, a homogeneous floating-point addition operation is defined as $x:=\fl(a+b)$, where $x$ is also a precision-$p$ floating-point value and $\fl()$ denotes rounding-to-nearest (ties to even)~\cite{ieee19} of the sum $a+b \in \R$ to $x \in \F$.
Each of $x$, $a$, and $b$ have $p$ bits in their significands, including the implicit bit, following the IEEE 754 standard~\cite{ieee19}.

Take $e_a$ and $e_b$ to be the exponents of $a$ and $b$.
One of the steps performed by floating-point adders is to compute the difference in the exponents, $d=e_a-e_b$ and shift the significand of $b$ by $d$ places to the right before performing the addition of it to the significand of $a$.
The top $u=\max(0, p-d)$ bits of the significand of $b$, padded with zeros, are used in the $p$-bit integer addition.
Addition is followed by an optional normalization step where one bit of the result may be shifted out.
Overall, in the significand alignment and normalization, up to $d+1$
least significant bits of the significand of $b$ are not utilised in forming the sum---their value is approximated by rounding the GRT (Guard, Round, Sticky) bits~\cite[Sec.~8.4.3]{erla04}.

We now define \emph{partial stagnation}, \emph{total stagnation}, and \emph{near total stagnation}, given any $n$ floating-point numbers $x_1, x_2, \dots, x_n \in \F$ added by $n-1$ addition operations using \emph{recursive summation} algorithm~\cite[Sec.~4.1]{high:ASNA2}: $\widehat{s_{i}} = \fl(\widehat{s_{i-1}}+x_{i})$ for $i = 2, \mathellipsis, n$ with $\widehat{s_1}=s_1=x_1$.
In general, $\widehat{s_i}$ is the floating point approximation of the actual sum $s_i$.
Here we call $\widehat{s_{i-1}}$ the \emph{accumulator}.
Each step $i$ has a corresponding $u_i$ which depends on the exponents and magnitudes of $\widehat{s_{i-1}}$ and $x_i$, and precision $p$.

\begin{definition}[Partial stagnation]
  Partial stagnation occurs over a set of $n-1$ addition operations, when $0<u_i\leq t$ for some $0 < M \leq n-1$ additions and for some given threshold $0 < t \leq p$. Here $t$ is the maximum number of most significant bits used in each of the $M$ addition operations for which the above conditions hold.
\end{definition}

\begin{definition}[Total stagnation]
  \label{def:tot-stagn}
  Total stagnation occurs over a set of $n-1$ addition operations, when for any $0 < M \leq n-1$ operations $u_i = 0$ and $\fl(\widehat{s_{i-1}} + x_{i}) \in \{\widehat{s_{i-1}}, x_{i}\}$.
\end{definition}

\begin{definition}[Near-total stagnation]
  Near-total stagnation occurs over a set of $n-1$ addition operations, when for any $0 < M \leq n-1$ addition operations $\fl(\widehat{s_{i-1}} + x_i) \notin \{\widehat{s_{i-1}}, x_{i}\}$, but $u_i=0$ (the addend with the larger magnitude is changed only by rounding).
\end{definition}

It is useful to differentiate these cases: \emph{total stagnation} tells us when the additions have no effect as the accumulator is set to one of the two addends; \emph{near-total stagnation} is a good indication that the changes being made contain very little useful information as all of them are a result of rounding; lastly, \emph{partial stagnation} is a good indication of how much information in the addends of smaller magnitudes gets used and is a parameterized measure through $t$.
In some circumstances, partial stagnation may transition into total stagnation if the magnitude of the accumulator is growing, such as when approximating the harmonic series~\cite{malo13}.
The conditions for near-total stagnation include that of total stagnation, and the conditions for partial stagnation include that of both near-total and total stagnation.

Note that $M$ additions, out of the $n-1$, need not be consecutive.
The conventional understanding of ``stagnation'', in informal statements, is applied to a sequence of consecutive addition operations.
We instead consider that in a sequence of $n-1$ addition operations, the $M$ additions that apply to the definition of stagnation may appear anywhere in the summation.
Alternative approaches are discussed briefly later in \ref{sssec:consecutive}.

Total stagnation is the most important type of the three defined, since it refers to complete loss of information from addends.
Partial and near-total stagnation can still apply to a summation where the accumulator changes by small amounts, but for total stagnation the accumulator never grows since operands are rounded off completely -- it is the most severe kind.
In our work we focus on total stagnation.
Future work could explore the effectiveness of measuring partial and near-total stagnation.

It is worth noting that when a single instance of an operand is rounded off, this is often called \textit{absorption}.
Using this terminology, total stagnation of Definition~\ref{def:tot-stagn} is comprised of $M$ absorptions.
This is analogous for other types of stagnation, when the term absorption is used to refer to most of the smaller addend's significand being lost to rounding.

We define the \textit{unit in the last place} of a floating point number, often expressed as $\ulp$.
\begin{definition}[Unit in the last place]
  \label{def:ulp}
  Let $x \in \mathds{F}$ with precision $p$, where $x = \pm 2^{e}m$. The unit in the last place is defined as $\ulp(x) = 2^{-(p-1)} \times 2^e$.
\end{definition}
When performing addition $x+y$ where $|x| > |y|$ in floating point with round-to-nearest, total absorption occurs if $|y| < |\ulp(x)|/2$.

We denote the average $(\sum_{i=1}^{n-1}u_i)/(n-1)$ as the
\emph{average level of stagnation}, taking $t=p$.
Taking smaller $t$ requires taking a subset of all additions $n-1$ that partially or totally stagnated.
This can be a good measure of information loss in a summation: a smaller value means more bits shifted out in additions that partially or totally stagnated, while a larger value means more bits were effectively used in additions (average $u$ is close to $t$).
This measure can tell us useful information about the numerical code without using a reference solution which is required in standard error comparison benchmarks.
Gathering statistics through $u$ about individual operations can be useful in detecting which parts of a numerical code suffer from loss of bits the most and indicate that those parts require the use of extra precision to increase $u$, a different summation algorithm, or require applying stochastic rounding~\cite{cfhm21}.

\section{Previous work}

Benz, Hildebrandt, and Hack~\cite{bhh12} and Lam, Hollingworth, and Stewart~\cite{lhs13} developed floating-point cancellation detection~\cite[Sec.~1.7]{high:ASNA2} in the addition operations at run time in order to report various statistics, such as the number of occurences and how many bits have been cancelled.
In $\fl(a+b)$ cancellation is detected through inspecting the exponents of arguments ($e_a$, $e_b$) and the result ($e_r$): it occurs when $e_r < \mathrm{max}(e_a, e_b)$.
Their measure of cancellation badness of a subtraction operation measures how much more precision would be needed for the cancellation to not be catastrophic.
From \cite{lhs13}: ``\emph{It is important to emphasize that cancellation is usually not a killer but instead a death certificate that reveals an earlier loss of information}''.
Using this analogy, stagnation is a killer---information exists in the addends before stagnation erases it over many additions.

The FPDebug tool provided by Benz, Hildebrandt, and Hack~\cite{bhh12} measures the cancellation severity for every variable and floating-point addition or subtraction instruction in a program.
A side-by-side computation is run in high precision, storing information on operations and high precision variable counterparts in a data structure called a \textit{shadow value}.
The shadow value can be used to measure how many bits in a variable are exact, and combining this with measuring the bits cancelled tells us the cancellation severity.
This can be tracked for every instance of cancellation by FPDebug providing pointers to where the cancelling operations take place.

Cancellation detection is implemented by Lam, Hollingworth, and Stewart~\cite{lhs13} similarly, measuring the cancellation severity of each instruction in a program.
In every addition and subtraction operation, the exponents of the operands and the result are recorded, and a \textit{priority} $\max(e_1, e_2) - e_{out}$ is measured. 
The priority will be positive if cancellation has occurred.
If the priority is equal to or above a threshold, which is $10$ by default, the cancellation is reported to a file.
The example programs use double precision.

Lopes et al.~\cite{lfdc18, lfhd18} have developed static code analysis tools to report which parts of the code are subject to cancellation and \emph{absorption}.
They focus on distinct addition or subtraction operations and define absorption on ranges $[min_1, max_1]$ and $[min_2, max_2]$ of the two floating-point values that are added or subtracted.
Assuming the first range constitutes the range of the addend smaller in magnitude, the authors focused on binary32 and defined absorption to occur when $\min(min_1, max_1)/\max(min_2, max_2) < 2^{-24}$.
This is a definition that is local to a single addition operation, and is fundamentally different from Definition~\ref{def:tot-stagn} of stagnation over a series of additions.
The software tool of Lopes et al. is a static code analyser, which requires users to specify the ranges of the input values that are then propagated through the arithmetic operations to determine if stagnation can occur given the ranges of inputs of each addition operation.
The benefits of this approach are that running the program is not required, but the downside is that ranges have to be specified and the tool can report absorption pesimistically if during the run-time the extreme ends of those ranges do not appear.

Floating point ``swamping'' is mentioned by Higham~\cite{high93s}.
In this work, different summation algorithms are tested and analysed when computing different series.
Swamping is mentioned for evaluating the Taylor series approximation of $\mathrm{e}^{-x} \approx \Sigma_{k = 0}^{N} (-x)^k/k!$ at $x = 2\pi$ in single precision.
When evaluated in this original ordering, the terms are increasing in magnitude for the first $7$ terms before decreasing, and alternate in sign.
This evaluation provides an example of catastrophic cancellation.
If we were performing summation on data of the same sign, we would expect that sorting by ascending in magnitude would mitigate stagnation since the smaller terms are accumulated first.
However, for this example, sorting by descending magnitude reduces stagnation and this leads to a reduction in error.
In descending order for this case, cancellation occurs when adding together the large operands.
When the smaller operands are added, which would usually undergo stagnation, the accumulator is smaller in magnitude than any of the addends that produced it, due to cancellation, therefore stagnation does not occur.
Other summation orderings fail to reduce stagnation because the accumulation of smaller values is absorbed when adding the larger terms, and this is best prevented by performing summation in descending order.

Sakr et al.~\cite{swc19} define floating-point swamping in the context of low precision operations for neural network training.
Their definitions for partial and total swamping are similar to our own for stagnation, however they define swamping on a single operation.
In the context of network training, swamping is defined for performing vector dot products of random variables.
The dot product evaluated up to $n$ terms is the sum $s_n = \Sigma_{i=1}^n p_i$ where $p_i$ are the product terms.
The accumulator is assumed to have $m_{acc}$ mantissa bits, and likewise $m_p$ mantissa bits for each product term.
Partial swamping is defined to occur when $2^{m_{acc} - m_p}|p_i| < |\widehat{s_{i-1}}| \leq 2^{m_{acc}}|p_i|$ and total swamping is defined to occur when $|\widehat{s_{i-1}}| > 2^{m_{acc}}|p_i|$.
Here, partial swamping is when $p_i$ has between $1$ and $m_p-1$ bits shifted off when aligning significands in addition,
and total swamping is when all $m_p$ bits of $p_i$ are shifted off.
However, total swamping includes the case where $\fl(\widehat{s_{i-1}} + p_i)$ returns one of the addends changed only by rounding up, which is a special case that we have more precisely defined as \emph{near-total stagnation} since in that case the addition modifies the accumulator's value by rounding.
The product terms are all assumed to be statistically independent with zero mean and variance $\sigma_p^2$.
Based on this assumption, the dot product $\widehat{s_n} = p_1 + \mathellipsis + p_n$ should satisfy $\operatorname{Var}(\widehat{s_n}) = n\operatorname{Var}(p_i)$.
However the authors argue that if swamping is occurring in the computed dot product $\widehat{s_n}$ then instead $\operatorname{Var}(\widehat{s_n}) = \tilde{n}\operatorname{Var}(p_i)$ for some constant $\tilde{n} < n$.
The authors propose that the ratio $\operatorname{Var}(\widehat{s_n})/ \operatorname{Var}({s_n})$ 
can be used to identify the severity of swamping.
This is defined as the \textit{variance retention ratio} (VRR).
If the VRR is much less than $1$, then significantly fewer than $nm_p$ bits have contributed to the dot product.
The authors provide a formula to determine the VRR based on only $n, m_{acc}$ and $m_p$.
If we fix two of these three parameters, we can identify the values of the third parameter where the VRR is closest to $1$ to identify which values would be most suitable to avoid swamping.

Stagnation is mentioned by Blanchard, Higham, and Mary~\cite{bhm20}.
The authors observe stagnation when performing sums of large vectors with uniformly-distributed random numbers in low precision formats.
It is mentioned that when stagnation occurs in an entirely non-negative sum, the rounding errors are all negative which may be much closer to the worst-case error bound.
In general, when all the data is of the same sign, error from stagnation only accumulates because all the rounding errors apply in the same direction.
We can justify here that detecting stagnation in floating-point arithmetic programs will help to identify significant error accumulation.
Our tool in Julia for stagnation detection is designed to differentiate between absorptions of positive and negative addends.

Croci et al.~\cite{cfhm21} and Fasi and Mikaitis~\cite{fami21} discuss stochastic rounding and demonstrate its effect on stagnation.
Stochastic rounding applies rounding randomly rather than deterministically, with probabilities relative to $1$ minus the relative closeness of each adjacent floating point number.
Croci~et~al.~\cite{cfhm21} state that exact stochastic rounding is immune to stagnation because the probability of absorption for a non-zero addition is never $1$.
El~Arar~et~al.~\cite{effm25} show that SR with limited precision random numbers is still subject to stagnation, but alleviates the effects of it compared with full stagnation that occurs with RN.

Furthermore, Croci~et~al.~\cite[Thm. 6.4]{cfhm21} consider the arbitrary vector dot product $y = a^\mathrm{T} b$, evaluated as $\widehat{y}$ in floating point arithmetic.
When the dot product is evaluated using stochastic rounding, the expectation $\mathds{E}(\widehat{y}) = y$ holds regardless of summation ordering.
This is not true for deterministic rounding; it was shown by Higham~\cite{high93s} that summation ordering has effect on the error.

Both works \cite{cfhm21} and  \cite{fami21} discuss stochastic rounding preventing stagnation in ODE/PDE applications and show that for sufficiently small timestep, stagnation is guaranteed in round-to-nearest but prevented with stochastic rounding.
The former discusses the numerical solution of the heat equation in one dimension, the latter explores the numerical solution of the initial value problem $\dot{x}(t) = y(t)$, $\dot{y}(t) = -x(t)$ with initial conditions $x(0) = 1$, $y(0) = 0$ using the explicit Euler method.
The numerical solution is evaluated in several 16-bit floating-point formats, with both round-to-nearest and stochastic rounding, for a range of timestep values.
The closed-form solution to this ODE is the unit circle in the $xy$-plane.
If the value of the timestep $\Delta t$ is not sufficiently small, the numerical solution will drift outwards from the unit circle.
Reducing $\Delta t$ improves accuracy of the solution for both round-to-nearest and stochastic rounding modes.
However, if $\Delta t$ is below a given threshold, stagnation will occur when using round-to-nearest, providing an octagon-shaped numerical solution to the unit circle.
This is an example where stagnation detection can help identify places of code that should be rewritten.

\section{Methods}

\subsection{Defining stagnation}

We have chosen that stagnation itself, being total, near-total or partial, refers to the number of absorptions taking place in a recursive summation.
The intuitive definition of stagnation restricts this to consecutive absorptions, but we have chosen to apply stagnation to cases beyond these.
From our testing, we have found no evidence that the common definition, referring to consecutive absorptions, is the superior definition.
Using our definition, we can measure the stagnation of any sum and report absorptions no matter the structure in which they occur.
Therefore we define stagnation of length $M$,
whether total, near-total or partial, as the number of absorptions of the corresponding type which have occurred in a summation of $n$ floating-point values, regardless of the position of absorptions in the sum.

\subsection{Reducing or eliminating stagnation}

Stagnation, when detected, can be reduced by various methods.
Stochastic rounding \cite{cfhm21} can be used to statistically preserve, through rounding, the quantities of the bits lost when aligning significands.
Compensated summation~\cite[Sec.~5.3.2]{mbdj18} can also be used, which computes the rounding errors and uses that information in later additions to reduce the error accumulation.
Lastly, since $u=p-d$ and $d$ depends on the exponents of the addends, higher precision $p$ can be used in parts of code that are subject to stagnation, to increase the average $u$.
Finally, stagnation can be reduced by reordering the sequence of operations if the data is available at the time of summation, aiming for additions to be of values with similar exponents~\cite{wolf64,grma25}---for example, if a set of positive numbers is sorted in a decreasing order it is well known that adding them in the reverse order reduces the error bound (whilst not necessarily the actual error~\cite{high93s}).

\subsection{Analysis of stagnation}

We can prove in what scenarios stagnation can and cannot happen which can help with testing the correctness of the automatic stagnation detection.

Take a set of positive numbers $x_1, \dots, x_n \in \F$ which are being generated during runtime, perhaps as part of a larger summation, and are not available for sorting or blocking in any way---summation is performed recursively.
We are approximating $s_n = \sum_{i=1}^{n}x_i$, whose approximation we denote by
\begin{equation}
  \label{eq:recursive}
  \widehat{s_n}=\fl(\widehat{s_{n-1}}+x_n)=\cdots\fl(\fl(s_1+x_2) + x_3) + \cdots,
\end{equation}
with $\widehat{s_1}=s_1=x_1$.
Here we prove various conditions in which stagnation can or cannot happen when computing \eqref{eq:recursive}.
We only look at positive values $x_i$ and the results apply analogously to the case of $x_i < 0$ for all $i \in [1, n]$.
If the sum contains addends of mixed signs, the analysis applies on the intermediate sums during which the signs of addends do not change.

\subsubsection{Stagnation and ordering of the addends}

\begin{theorem}
  Given $x_1, \dots, x_n \in \F$ where $x_i\geq 0$ for $i=1\dots n$ and a recursive sum $\widehat{s_n}=\fl(\widehat{s_{n-1}}+x_n)$ with $\widehat{s_1}=s_1=x_1$, if the sequence $x_2,\dots, x_n$ is decreasing or stays constant, total stagnation of length $1$ to $n-1$ can happen.
  \label{thm:stagn0}
\end{theorem}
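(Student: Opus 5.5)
This is an existence statement, so I will prove it by construction. For every $M$ with $1 \le M \le n-1$ I will exhibit non-negative $x_1,\dots,x_n \in \F$ with $x_2,\dots,x_n$ non-increasing such that exactly (or at least) $M$ of the additions in \eqref{eq:recursive} are total absorptions in the sense of Definition~\ref{def:tot-stagn}. The tool throughout is the criterion stated after Definition~\ref{def:ulp}: if $|y| < \ulp(x)/2$ then $\fl(x+y)=x$ under round-to-nearest. With the exponent bookkeeping from the introduction, $y < \ulp(x)/2$ forces $d \ge p+1$, so every bit of $y$ is shifted out and $u_i=\max(0,p-d)=0$. Hence such an addition satisfies both conditions of Definition~\ref{def:tot-stagn}.

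\textbf{Construction.} First fix a large accumulator, say $x_1 = 2^{E}$ with $E$ comfortably inside $[e_{min}+p+1,\,e_{max}]$, so that $\ulp(x_1)/2 = 2^{E-p}$ is representable and far above the subnormal range. Choose the first $M$ addends equal to a constant $y = 2^{E-p-1}$, which is below $\ulp(x_1)/2$. By induction on $i$, each addition $\fl(\widehat{s_{i-1}} + y)$ returns $\widehat{s_{i-1}} = x_1$, so the accumulator never moves and each of these $M$ additions is a total stagnation.

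\textbf{Completing to exactly $M$.} To get exactly $M$ absorptions rather than at least $M$, the remaining addends $x_{M+2},\dots,x_n$ must not be absorbed. I will set them to $0$ and handle them separately, since then $\fl(\widehat{s}+0)=\widehat{s}$ is trivially in $\{\widehat{s},0\}$. To avoid counting these as absorptions, I will instead place the non-absorbed additions first. Take $x_1=x_2=\cdots=x_{n-M}=2^{E}$, a constant and hence admissible run. Each of these additions is exact with $d\le 1$ (the accumulator grows as $k2^E$, and $d\le \lceil\log_2 k\rceil$, which is small as long as $n<2^{p}$), so $u_i>0$ and they do not stagnate. Then append $M$ copies of a tiny $y$, chosen below half the ulp of the final accumulator $(n-M)2^E$, for example $y=2^{E-p-1}$ or smaller. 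This gives a non-increasing sequence $x_2,\dots,x_n$ with exactly $M$ total absorptions. The case $M=n-1$ is the first construction, and the constant sequence also shows that the ``stays constant'' case is covered.

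\textbf{Main obstacle.} The only delicate step is the exponent-range bookkeeping. I need that $2^{E-p-1}$ is representable and that the growing accumulator $k2^E$ neither overflows nor loses exactness. This requires assuming the format has enough exponent range, roughly $e_{max}-e_{min} > p+\log_2 n$, and that $n$ is modest relative to $2^{p}$. Otherwise I can make the leading additions exact with zeros replaced by equal small powers of two. I will state this range assumption explicitly as part of ``can happen''.
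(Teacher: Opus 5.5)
Your construction is the paper's own. The paper takes $x_1=2^{j}$ with all later addends below $2^{j-p-1}$ (safely under $\ulp(x_1)/2=2^{j-p}$), also cites the harmonic series, and dismisses shorter lengths as ``analogous''. Your leading block of copies of $2^E$ followed by $M$ tiny addends makes that last step explicit, and it is correct under the assumption you state. Two things need fixing, though.

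First, ``$d\le 1$'' is false. At step $k$ the accumulator is $(k-1)2^E$, so $d=\lfloor\log_2(k-1)\rfloor$. What you actually need is exactness ($k\le 2^p$) and $d<p$, which together amount to exactly $n-M\le 2^p$, plus no overflow.

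Second, the fallback in your last paragraph (``otherwise I can make the leading additions exact\dots'') cannot work, because the restriction is necessary, not technical.
\begin{itemize}
\item If $x_2\ge\cdots\ge x_n>0$, then for $k\ge 3$ we have $\widehat{s_{k-1}}\ge x_2\ge x_k$, so an absorption can only return $\widehat{s_{k-1}}$.
\item By monotonicity of rounding, once step $k$ absorbs, every later step does. Hence the non-absorbed steps after step $2$ form a prefix $3,\dots,k$.
\item Each of these raises the accumulator by at least $x_i/2\ge x_k/2$. When $x_i\le 2\ulp(\widehat{s_{i-1}})$ the increase is at least one ulp. Otherwise the rounding error is at most $\ulp(\widehat{s_i})/2\le\ulp(\widehat{s_{i-1}})<x_i/2$, using $\widehat{s_i}\le 2\widehat{s_{i-1}}$.
\item Summing gives $\widehat{s_{k-1}}\ge (k-1)x_k/2$, whereas non-absorption at step $k$ requires $x_k\ge\ulp(\widehat{s_{k-1}})/2>2^{-p-1}\widehat{s_{k-1}}$.
\end{itemize}
Thus $k\le 2^{p+2}$, and exactly $M$ total absorptions is impossible once $n-M>2^{p+2}$, however the leading addends are chosen. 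So keep your explicit range assumption as part of the statement; the paper's ``analogous'' remark silently needs the same restriction.
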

\vspace{-0.5cm}
\begin{proof}
  The harmonic series is an example of this scenario.
  This is expressed as $1 + 1/2 + 1/3 + \mathellipsis = \sum_{i=1}^\infty 1/i$.
  In real arithmetic the summation diverges.
  In finite precision, the sum eventually grows to a point where the next addend is absorbed completely \cite{malo13}.
  The addends are decreasing, so total stagnation occurs.
  Another example is as follows. Take $x_1=2^{j}$ for $j \in \mathbb{Z}$ and $x_i<2^{j-p-1}$ for all $i=2,\dots, n$.
  Then $\widehat{s_n}=s_1=x_1$ since $\fl(s_1+x_i)=s_1$ for all $i=2\dots n$.
  The proof is analogous for shorter stagnations with $x_i<2^{j-p-1}$ for some $i=2 \dots n$.
\end{proof}

\begin{theorem}
  Given $x_1, \dots, x_n \in \F$ where $x_i\geq 0$ for $i=1\dots n$ and a recursive sum $\widehat{s_n}=\fl(\widehat{s_{n-1}}+x_n)$ with $\widehat{s_1}=s_1=x_1$, if the sequence $x_2,\dots, x_n$ is increasing, total stagnation of length $1$ to $n-1$ can happen.
    \label{thm:stagn1}
\end{theorem}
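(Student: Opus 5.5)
The statement is existential: for each length $M$ with $1\le M\le n-1$ we need to exhibit one strictly increasing sequence $x_2<\dots<x_n$ for which exactly (or at least) $M$ of the additions are absorptions in the sense of Definition~\ref{def:tot-stagn}. I will mirror the second construction in the proof of Theorem~\ref{thm:stagn0}, using a large first term and tiny increasing addends.

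\textbf{Maximal length $n-1$.} Take $x_1=2^{j}$. Then $\ulp(x_1)=2^{j-p+1}$, and any nonnegative addend $y<\ulp(x_1)/2=2^{j-p}$ is rounded off: $\fl(2^j+y)=2^j$. For safety, stay below the bound $2^{j-p-1}$ already used in Theorem~\ref{thm:stagn0}, since the spacing just below $2^j$ is half as large and we are only moving upward. Choose $n-1$ distinct floating-point numbers
\[
0<x_2<x_3<\dots<x_n<2^{j-p-1},
\]
for instance $x_i=2^{j-p-1-(n-i)-1}$, or consecutive floating-point numbers just below $2^{j-p-1}$. The exponent range of $\F$ must be large enough that these are representable. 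Induction on $i$ then gives $\widehat{s_i}=2^j$ for all $i$: if $\widehat{s_{i-1}}=2^j$, then $2^j\le 2^j+x_i<2^j+\tfrac12\ulp(2^j)$, so the sum rounds to $2^j$. Here $u_i=0$ because the exponent gap exceeds $p$, and the result equals the accumulator. Hence all $n-1$ additions stagnate totally.

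\textbf{Shorter lengths $M<n-1$.} Keep the first $M$ addends $x_2<\dots<x_{M+1}$ below $2^{j-p-1}$, so they are absorbed exactly as above. Choose the remaining addends increasing and at least $2^{j-p+1}=\ulp(2^j)$, for example $x_i=2^{j-p+1+(i-M-2)}$. Each of these changes the accumulator, because the accumulator is then at least $2^j$ and each later addend is at least a sizeable fraction of its ulp.

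\textbf{Main obstacle.} The accumulator grows once the later addends are added, and its ulp may grow with it. I must check that each later addend is still at least half an ulp of the current accumulator, so that it is not absorbed; an absorption there would be harmless for "can happen" but would make the count inexact. The simplest fix is to take the later addends doubling at least as fast as the accumulator, e.g. $x_i\ge \widehat{s_{i-1}}$. Then no absorption occurs and the sum never rounds back to $\widehat{s_{i-1}}$. With this choice the count of total stagnations is exactly $M$.
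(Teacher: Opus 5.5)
Your construction is correct, but it takes a different route from the paper's. The paper sets $x_1=1$ and $x_i=2^{(i-1)(p+1)}$, so the whole sequence, $x_1$ included, grows by a factor $2^{p+1}$ per step. At each step the accumulator, which equals the previous addend, lies below half an ulp of the new addend, so $\fl(\widehat{s_{i-1}}+x_i)=x_i$. That is the second branch of Definition~\ref{def:tot-stagn}, the ``stagnation from the left'' the paper discusses right after the theorem.

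You instead use the freedom the hypothesis leaves in $x_1$. A large $x_1=2^j$ swallows tiny increasing addends, so $\fl(\widehat{s_{i-1}}+x_i)=\widehat{s_{i-1}}$. This is essentially the second example in the proof of Theorem~\ref{thm:stagn0} with the small addends listed in increasing order.

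Each route buys something different:
\begin{itemize}
\item The paper's example proves a stronger statement, since it still works if $x_1<x_2$ is also required, and it shows a mechanism specific to growing data. However, it consumes $p+1$ binades per addition, so in a fixed format it reaches only about $(e_{max}-e_{min})/(p+1)$ absorptions.
\item Your version, with consecutive floating-point numbers as addends, needs only about $p+2$ binades for up to $2^{p-1}$ absorptions, so it exhibits long stagnation in any real format.
\item You also attempt exact lengths $M<n-1$, which the paper dismisses as analogous.
\end{itemize}

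That exact-length refinement contains one false step. Taking $x_i\ge\widehat{s_{i-1}}$ only prevents the addend from being rounded off. If $x_i$ is far larger than the accumulator, e.g.\ $x_i\ge 2^{p+1}\widehat{s_{i-1}}$, then $\fl(\widehat{s_{i-1}}+x_i)=x_i$ with $u_i=0$. Definition~\ref{def:tot-stagn} also counts this as total stagnation (it is exactly the paper's mechanism), so your count would no longer be $M$.

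You need a two-sided bound such as $2^{-p}\widehat{s_{i-1}}<x_i<2^{p}\widehat{s_{i-1}}$. Half an ulp of the larger operand is at most $2^{-p}$ times that operand, hence smaller than the other operand, so the rounded sum strictly exceeds both operands. Your explicit doubling addends $x_i=2^{j-p+1+(i-M-2)}$ do satisfy this bound. So does the simpler choice $x_i=\widehat{s_{i-1}}$, which makes each later sum an exact doubling. The construction therefore survives; only its justification needs the upper half of the bound.
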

\begin{proof}
  Take $x_1=1$, $x_2=2^{p+1}$, $x_3=2^{(2p+2)}$ (thus, $x_i=2^{(i-1)(p+1)}$), then $\fl(x_1+x_2)=x_2$, $\fl(x_2+x_3)=x_3$ and so on, giving stagnation of length $n-1$. Stagnation of of length shorter than $n-1$ can be analogously shown.
\end{proof}

Theorem~\ref{thm:stagn1} shows an example of stagnation occuring from the left side, where the accumulator is changing on every step, but its value is one of the addends on that step.
This is a reverse scenario of a typical understanding of stagnation, where the accumulator is growing to the point where it starts to absorb the addends.
In this case, the accumulator is not growing enough and is absorbed by the following addends.

As shown by the previous two theorems, total stagnation can occur in both increasing and decreasing series.
We will consider how to constrain data such that total stagnation could not occur when performing recursive summation on a decreasing sequence.
First, we consider how to prevent stagnation given only a lower bound on the data.
This comes from the deduction that stagnation can occur in a sum of a descending sequence in two ways:
either the accumulator grows large enough that the next addends are rounded off completely,
or the addends themselves decrease by a significant enough amount to be rounded off in the next addition while the accumulator is relatively unchanged.
We can impose a lower bound on the data to prevent both possibilities.
\begin{theorem}
  Given a decreasing sequence of positive values $x_1, \mathellipsis, x_n \in \mathds{F}$ where $x_{k-1} > x_{k} > 0$ for $k = 2, \mathellipsis, n$
  and a recursive sum $\widehat{s_1} = x_1, \widehat{s_k} = \fl(\widehat{s_{k-1}} + x_k)$,
  total stagnation cannot occur if $x_k > x_1 / 2$ for all $k$ and $2^{p-1}x_1 < N_\mathrm{max}$ where $p$ is the precision of $\mathds{F}$ and $N_\mathrm{max}$ is the greatest normalised floating-point number.
  \label{thm:decsum-lb}
\end{theorem}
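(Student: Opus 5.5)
The plan is to check directly that no single addition $\widehat{s_{k}}=\fl(\widehat{s_{k-1}}+x_k)$, $k=2,\dots,n$, is a total absorption. That is, we show $\widehat{s_k}\notin\{\widehat{s_{k-1}},x_k\}$ for every $k$. We use the absorption criterion stated after Definition~\ref{def:ulp}: when $|a|>|b|$, round-to-nearest returns $a$ only if $|b|\le \ulp(a)/2$, with equality possible only in a tie.

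\textbf{Step 1 (bounding $n$).} Every $x_k$ for $k\ge2$ lies in the open interval $(x_1/2,x_1)$, and the $x_k$ are pairwise distinct floating-point numbers. Write $x_1=2^{e}m$. The interval $(x_1/2,x_1)$ has length $x_1/2$, and every float in it is at least $x_1/2\ge 2^{e-1}$, so consecutive floats there are spaced by at least $2^{-(p-1)}2^{e-1}$. Counting floats on this grid gives $n-1\le 2^{p-1}$. This count is the role of the strict monotonicity hypothesis. Without it, repeated addends would let the accumulator grow without bound and eventually swamp them, as in Theorem~\ref{thm:stagn0}.

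\textbf{Step 2 (bounding the accumulator).} By induction on $k$, using monotonicity of $\fl$ and $\widehat{s_{k}}\ge\widehat{s_{k-1}}$ (all terms are positive), show that $x_1\le\widehat{s_{k-1}}<2^{p}x_1$ for all $k\le n$. A crude route is $\widehat{s_{k-1}}\le \fl(\cdots\fl(x_1+x_1)\cdots+x_1)$, which is at most $(k-1)x_1$ rounded. Since $k-1\le 2^{p-1}$, every partial product $jx_1$ with $j\le2^{p-1}$ stays below $2^{p}x_1$, even after rounding up. The hypothesis $2^{p-1}x_1<N_{\max}$ guarantees that these intermediate values are finite and normal, so no overflow occurs and the ulp formula applies. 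The main obstacle is this step: keeping the rounding errors in the induction from pushing the accumulator into the binade where $\ulp(\widehat{s_{k-1}})>x_1$. If the crude bound is too lossy, I would instead bound the exact sum $s_{k-1}<(k-1)x_1\le 2^{p-1}x_1$. I would then use the fact that each rounding error is at most half an ulp, which keeps $\widehat{s_{k-1}}$ within a factor $(1+2^{-p})^{k}$ of $s_{k-1}$, still below $2^{p}x_1$.

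\textbf{Step 3 (no absorption).} From $\widehat{s_{k-1}}<2^{p}x_1$ we get $\ulp(\widehat{s_{k-1}})\le 2^{-(p-1)}\widehat{s_{k-1}}<2x_1$, so $\ulp(\widehat{s_{k-1}})/2<x_1$. This is not yet enough, because we need $x_k>\ulp(\widehat{s_{k-1}})/2$ and only know $x_k>x_1/2$. So I would sharpen Step 2 to $\widehat{s_{k-1}}<2^{p-1}x_1$, which gives $\ulp(\widehat{s_{k-1}})\le x_1$ (ulps are powers of two, which helps here). Then $x_k>x_1/2\ge\ulp(\widehat{s_{k-1}})/2$ strictly, so there is no tie and $\fl(\widehat{s_{k-1}}+x_k)>\widehat{s_{k-1}}$. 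Finally, $\widehat{s_k}\ge\widehat{s_{k-1}}\ge x_1>x_k$ rules out $\widehat{s_k}=x_k$. Hence $\widehat{s_k}\notin\{\widehat{s_{k-1}},x_k\}$ for all $k$, and Definition~\ref{def:tot-stagn} is never satisfied.
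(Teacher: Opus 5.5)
Your outline is the paper's: count the admissible addends, bound the accumulator linearly, check $x_k>\ulp(\widehat{s_{k-1}})/2$, and use $\widehat{s_k}\ge x_1>x_k$. The last step is fine. The gap is exactly where you flag it. Step 3 needs $\widehat{s_{k-1}}<2^{p-1}x_1$, and ``I would sharpen Step 2'' is not an argument. Neither of your tools gives this bound.

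\emph{The crude comparison fails.} Comparing with repeated addition of $x_1$ does not stay at ``$(k-1)x_1$ rounded'', because the roundings accumulate. Take $x_1=7/4$. Once the accumulator passes $2^{p-1}$ the floats are integers, so every addition of $x_1$ rounds to $+2$. The computed sum of $2^{p-1}-1$ copies therefore ends at roughly $1.9\cdot 2^{p-1}$, well above $2^{p-1}x_1$.

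\emph{The fallback also fails.} The relative bound loses a factor $(1+2^{-p})^{2^{p-1}}\approx e^{1/2}$. As written you only have $\widehat{s_{k-1}}<2^{p}x_1$, which you correctly note is too weak.

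The missing idea is that you never need $2^{p-1}x_1$. Write $2^e\le x_1<2^{e+1}$. Since ulps are powers of two, $\widehat{s_{k-1}}<2^{e+p}$ already gives $\ulp(\widehat{s_{k-1}})\le 2^e\le x_1$, hence $\ulp(\widehat{s_{k-1}})/2\le x_1/2<x_k$.

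That bound follows from your crude comparison if you compare with the power of two $2^{e+1}$ instead of $x_1$. By induction and monotonicity of $\fl$, $\widehat{s_j}\le\fl\bigl(2^{e+1}(j-1)+2^{e+1}\bigr)=2^{e+1}j$. This holds because $2^{e+1}j\in\F$ for integers $1\le j<2^p$, so no rounding error accumulates. The hypothesis $2^{p-1}x_1<N_{\mathrm{max}}$ keeps these values in range.

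This route also needs the sharp count. The open interval $(x_1/2,x_1)$ contains exactly $2^{p-1}-1$ floats, which is the paper's $n\le 2^{p-1}$. Hence $k-1\le 2^{p-1}-1$ and $\widehat{s_{k-1}}\le 2^{e+p}-2^{e+1}<2^{e+p}$.

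Your Step 1 only claims $n-1\le 2^{p-1}$. That would allow $\widehat{s_{n-1}}=2^{e+p}$, with $\ulp/2=2^e$, which may exceed $x_n$. Its spacing argument actually yields only about $m\,2^{p-1}$, because it ignores that floats in $[2^e,x_1)$ are twice as far apart.

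The paper's own induction $\widehat{s_k}\le kx_1$ rests on $\fl(kx_1+x_1)=(k+1)x_1$, which presumes $(k+1)x_1\in\F$. The power-of-two comparison is what makes that step rigorous too.
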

\vspace{-0.5cm}
\begin{proof}
  At step $k$ we are performing $\widehat{s_{k}} = \fl(\widehat{s_{k-1}} + x_{k})$.
  Absorption will not occur at step $k$ if $x_k > \ulp(s_{k-1})/2$, so we will show that this condition can hold provided the assumptions are held.

  First, we will prove the bound $\widehat{s_k} \leq kx_1$.
  We do this by induction.
  \begin{itemize}
    \item $\widehat{s_1} = x_1$ by definition.
    \item $\widehat{s_2} = \fl(\widehat{s_1} + x_2) \leq \fl(x_1 + x_1) = 2x_1$.
    \item Assume $\widehat{s_k} \leq kx_1$.
    \item Then $\widehat{s_{k+1}} = \fl(\widehat{s_k} + x_{k+1}) \leq \fl(kx_1 + x_1) = (k+1)x_1$, provided $k \leq 2^{p-1}$.
  \end{itemize}
  The greatest $n$ such that $x_k > x_1 / 2$ holds for $k = 1, \mathellipsis, n$ is $n = 2^{p-1}$.
  Let $n$ take this value and consider the sum at step $n$.
  The maximum value of the current addend is $x_n = x_{2^{p-1}} = x_1 / 2 + \ulp(x_1)$.
  Assume that the sequence of addends is decreasing at the smallest possible rate, where each subsequent addend is one $\ulp$ less than the previous, or half an $\ulp$ if the previous term is a power of two.
  The accumulator is bounded by $\widehat{s_{n-1}} \leq (n-1)x_1$, so $\ulp(\widehat{s_{n-1}})/2 \leq \ulp((2^{p-1}-1)x_1)/2 \leq 2^{p-2}\ulp(x_1) \leq x_1 / 2$.
  Therefore $x_n > \ulp(\widehat{s_{n-1}})/2$ holds.
  We have shown that if the given lower bound is satisfied, we will not observe stagnation for any recursive summation of decreasing floating point numbers.

  If the sequence $x_1, \mathellipsis, x_n$ decreases at a greater rate than the assumption, the number of terms $n$ when we reach $x_n = x_1 / 2 + \ulp(x_1)$ is less than $2^{p-1}$.
  We will not be able to hold the condition $x_k > x_1 / 2$ beyond this point, and stagnation may occur.
\end{proof}

The theorem above gives us a lower bound for a sequence of descending data based on the first argument.
If this bound is satisfied, we can guarantee prevention of total stagnation.
However, this is an overconservative bound which also imposes a limit on the number of terms that the sequence can contain.
Once a sequence falls below this bound, which can happen far before the asserted maximum number of terms in general, the result of this theorem does not hold, but stagnation still may not occur.
We will now consider a more general and simple result for summation of descending data.
\begin{proposition}
  Given a decreasing sequence $x_1, \mathellipsis, x_n \in \F$ where $x_i > x_{i+1} > 0$ for $i = 1, \mathellipsis, n$ and a recursive sum $\widehat{s_1} = x_1$, $\widehat{s_{k+1}} = \fl(\widehat{s_k} + x_{k+1})$ for $k = 2, \mathellipsis, n-1$,
  total stagnation cannot occur if $x_{k+1} > k \ulp(x_1)/2$ for $k = 2, \mathellipsis, n-1$.
  \label{prp:decsum-nostag}
\end{proposition}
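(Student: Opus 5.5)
The plan is to show, step by step, that every addend is large relative to half an ulp of the current accumulator, so that no addition returns one of its operands unchanged. As in the proof of Theorem~\ref{thm:decsum-lb}, absorption of $x_{k+1}$ at step $k+1$ is excluded once we know $x_{k+1} > \ulp(\widehat{s_k})/2$; since the addends are positive and $x_{k+1} < x_1 \le \widehat{s_k}$, the only way to get $\fl(\widehat{s_k}+x_{k+1}) \in \{\widehat{s_k}, x_{k+1}\}$ is for the result to equal $\widehat{s_k}$. It therefore suffices to bound $\ulp(\widehat{s_k})$ from above by something like $k\,\ulp(x_1)$.

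\textbf{Step 1: bound the accumulator.} By induction, $\widehat{s_k} \le k x_1$ up to rounding, exactly as in Theorem~\ref{thm:decsum-lb}: each $x_i < x_1$, so $\widehat{s_{k}} = \fl(\widehat{s_{k-1}}+x_k) \le \fl((k-1)x_1 + x_1)$, using monotonicity of round-to-nearest. I expect to need the mild assumption that no overflow occurs.

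\textbf{Step 2: bound the ulp.} This is the main obstacle. Write $x_1 = 2^{e}m$ with $1 \le m < 2$. Then $\widehat{s_k} < 2k\cdot 2^{e}$, so the exponent of $\widehat{s_k}$ is at most $e + \lfloor \log_2 k \rfloor + 1$ (possibly one less). Hence
\[
\ulp(\widehat{s_k}) \le 2^{\lceil \log_2 k\rceil}\ulp(x_1) < 2k\,\ulp(x_1).
\]
This crude bound loses a factor of $2$ against the hypothesis $x_{k+1} > k\,\ulp(x_1)/2$. The first thing I would try is a sharper argument using $\ulp(y) \le 2^{-(p-1)}\,y$ for normal $y$, combined with $\widehat{s_k} \le k x_1$. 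This gives $\ulp(\widehat{s_k}) \le 2^{-(p-1)}k x_1$, and I would then compare with $\ulp(x_1)= 2^{-(p-1)}2^{e}$, for which $x_1 < 2\cdot 2^{e}$. If that still leaves a factor of $2$, I would instead use the fact that absorption under round-to-nearest requires $x_{k+1} \le \ulp(\widehat{s_k})/2$, with ties-to-even handled at equality. I would also exploit that $\widehat{s_k}$ is a multiple of $\ulp(x_1)$, together with the exponent counting above. The aim is to show that $\ulp(\widehat{s_k}) \le k\,\ulp(x_1)$ whenever the exponent step actually occurs, since $\widehat{s_k}$ reaching the next binade requires roughly $k$ to be at least the corresponding power of two.

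\textbf{Step 3: conclude.} Combining the two bounds, $\ulp(\widehat{s_k})/2 \le k\,\ulp(x_1)/2 < x_{k+1}$ for each $k=2,\dots,n-1$, so no step is absorbed. The remaining first step, adding $x_2$ to $x_1$, cannot be absorbed because $x_2 > 0$ is within a factor comparable to $x_1$; I would check this directly. It then follows that total stagnation of any length $M \ge 1$ is impossible.
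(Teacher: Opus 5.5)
Your plan follows the paper's own argument, whose proof is the single claim $\ulp(\widehat{s_k})/2 \le \ulp(kx_1)/2 \le k\,\ulp(x_1)/2$. That is exactly the inequality you set out to prove in Step~2. The factor of $2$ you ran into there is genuine, and no sharper bookkeeping removes it. Already $\ulp(kx_1) \le k\,\ulp(x_1)$ fails for $k=3$, $x_1 = 3/2$, since $\ulp(9/2) = 4\,\ulp(3/2)$.

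Your heuristic that entering a new binade ``requires $k$ to be at least the corresponding power of two'' breaks because $x_1$ may lie in the upper part of its binade $[2^e,2^{e+1})$. Then three addends already reach $[2^{e+2},2^{e+3})$, where the ulp is $4\,\ulp(x_1)$, while the hypothesis only gives $x_4 > \tfrac{3}{2}\ulp(x_1)$.

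In fact the proposition is false as stated. In any binary format with $p \ge 6$ (binary16, binary32, \dots), let $U = \ulp(3/2) = 2^{1-p}$ and take $x_1 = 3/2$, $x_2 = 11/8$, $x_3 = 5/4$, $x_4 = 7U/4$. The sequence is strictly decreasing, and $x_3 > 2U/2$ and $x_4 > 3U/2$, so the hypotheses hold for $k=2,3$. Yet $\widehat{s_2} = 23/8$ and $\widehat{s_3} = 33/8$ are computed exactly, $\ulp(\widehat{s_3}) = 4U$, and $x_4 < 2U$. Hence $\fl(\widehat{s_3}+x_4) = \widehat{s_3}$ with $u_4 = 0$, which is a total absorption. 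So the inequality $\ulp(\widehat{s_k})/2 \le k\,\ulp(x_1)/2$ that your Step~3 needs cannot be established by any argument.

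What your approach does prove is the statement with the hypothesis strengthened to $x_{k+1} > k\,\ulp(x_1)$. To get there, run the Step~1 induction against exactly representable bounds rather than against $jx_1$. From $\widehat{s_{j-1}} \le (j-1)x_1$ you only get $\widehat{s_j} \le \fl(jx_1)$, which may exceed $jx_1$, so the induction does not close. The paper's step ``$\fl(kx_1+x_1) = (k+1)x_1$'' has the same defect.

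Instead, write $x_1 < 2^{e+1}$ and use monotonicity of rounding (barring overflow): $\widehat{s_j} \le \fl\bigl((j-1)2^{e+1} + 2^{e+1}\bigr) = j\,2^{e+1}$. Hence $\ulp(\widehat{s_k}) \le \ulp(k\,2^{e+1}) = 2^{\lfloor \log_2 k \rfloor + 1}\ulp(x_1) \le 2k\,\ulp(x_1)$. Then $x_{k+1} > k\,\ulp(x_1) \ge \ulp(\widehat{s_k})/2$ forces $\fl(\widehat{s_k}+x_{k+1}) > \widehat{s_k} \ge x_1 > x_{k+1}$, so the result is neither operand.

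Separately, your handling of the first addition is not justified. The hypotheses constrain $x_2$ only through $x_2 > x_3 > \ulp(x_1)$, which needs $n \ge 3$. For $n = 2$ the statement fails trivially with any $x_2 < \ulp(x_1)/2$.
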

\vspace{-0.5cm}
\begin{proof}
  At step $k+1$ we are adding $\widehat{s_k}$ and $x_{k+1}$. We have $\ulp(\widehat{s_k})/2 \leq \ulp(kx_1)/2 \leq k \ulp(x_1)/2$ by similar working as in Theorem \ref{thm:decsum-lb}, so we can write the condition for no total stagnation as $x_{k+1} > k \ulp(x_1)/2$.
\end{proof}
This is a more general result that depends on both the initial value of the sequence and the number of terms.

These theorems provide an understanding of how stagnation can appear in a recursive summation.
If we can identify that some data exhibits a similar pattern to what is shown in a theorem, we can deduce if stagnation may occur.
Theorem \ref{thm:stagn0} shows that with decreasing data, total stagnation can occur if the accumulator grows sufficiently such that the addends eventually decrease enough to be absorbed.
Theorem \ref{thm:stagn1} considers that if the addends themselves are growing, the accumulator may instead be absorbed relative to the large addends, and the repeated instance of this is recognised as stagnation.
We then show a restriction in Theorem \ref{thm:decsum-lb}, where if we enforce a bound on decreasing data undergoing recursive summation then we can guarantee no stagnation.
The arguments of Theorem \ref{thm:decsum-lb} apply to Proposition \ref{prp:decsum-nostag}, but the bound here depends on the length of the summation so the growth can be greater.

\subsection{Implementing stagnation detection in MATLAB}

As a proof of concept, we have overloaded the built-in \texttt{plus(in1, in2)} function in MATLAB to perform stagnation detection, measuring the number of instances where total or near-total absorptions occur (total and near-total stagnation combined) for single precision addition operations.
Global variables $\mathtt{u}$, $\mathtt{additions}$ and $\mathtt{total\_absorptions}$ are updated each time \texttt{plus} is called.
During addition, the difference in the exponents of the operands is recorded and used to compute the number of bits from the significand of the smaller addend that contribute to the addition.
The global $\mathtt{u}$ stores the accumulation of these contributing bits for all additions.
If all the bits of the smaller value's significand are shifted off, then $\mathtt{u}$ is unchanged and $\mathtt{total\_absorptions}$ is incremented.
$\mathtt{additions}$ is incremented each time the operation is called.
On completion of a recursive summation, the user can check these values to identify how stagnation has affected the sum.
The $\mathtt{total\_absorptions}$ field records the stagnation length, and the level of stagnation determined as the average $\mathtt{u}$ per sum can be obtained as $\mathtt{u/additions}$.

\subsection{Implementing stagnation detection in Julia}

While MATLAB was used for initial prototyping, our focus is on implementing a usable tool for the Julia programming language \cite{bek17} which is becoming a popular choice for scientific computing.
Our tool \texttt{StagnationDetection.jl} v0.1 is available on GitHub\footnote{
  https://github.com/willwoolf/StagnationDetection.jl
}.

\subsubsection{Design of the Julia tool}

We have developed \texttt{StagnationDetection.jl} for performing stagnation detection in Julia programs, designed to require minimal modification of source code.
The functionality is similar to the MATLAB tool mentioned above but with a greater breadth of features.
\texttt{StagnationDetection.jl} can be customised to measure total, near-total, or partial stagnation depending on a customiseable threshold.

The tool provides new data types \texttt{Float64SD}, \texttt{Float32SD} and \texttt{Float16SD}, which are stagnation-detection-activating alternatives to the \texttt{Float64}, \texttt{Float32} and \texttt{Float16} floating point types respectively.
We will refer to these types as \textit{stagnation detection types}, belonging under the abstract type \texttt{FloatSD}.
When performing arithmetic using these data types, stagnation detection is performed automatically.

We will discuss the features of the \texttt{Float64SD} data structure specifically, so we can distinguish between \texttt{Float64SD} and the abstract type \texttt{FloatSD} it belongs to.
However all of \texttt{Float64SD}, \texttt{Float32SD} and \texttt{Float16SD} offer the same features.
A \texttt{Float64SD} stores six fields, named \texttt{value}, \texttt{u}, \texttt{t}, \texttt{additions}, \texttt{absorptions\_up} and \texttt{absorptions\_dn}.
\begin{itemize}
  \item \texttt{value} (either \texttt{Float64}, \texttt{Float32} or \texttt{Float16}) stores the value for regular floating-point arithmetic.
  \item \texttt{u} (\texttt{UInt64}) is the number of bits preserved across every addition that has contributed to this variable.
  If $x = x_1 + \mathellipsis + x_n$ with stagnation detection then \texttt{u} is $u_1 + \mathellipsis + u_{n-1}$.
  \item \texttt{t} (\texttt{Int8}) is the threshold for the bits preserved in an addition, thus is used to determine whether we are measuring total ($t = -1$), near total ($t = 0$) or partial ($t > 0$) stagnation. This will be reflected by reporting functions.
  \item \texttt{additions} (\texttt{UInt64}) is the number of additions that have been performed in contributing to this variable.
  \item \texttt{absorptions\_up} (\texttt{UInt64}) is the number of additions that have been classified as an absorption, depending on the operands and $t$, where the rounded-off operand is negative and thus the rounding error is positive.
  \item \texttt{absorptions\_dn} (\texttt{UInt64}) is the number of absorptions in the opposite direction to \texttt{absorptions\_up}.
\end{itemize}

In Julia syntax we write \texttt{f(x::T, y::U)} to mean a function on two arguments, \texttt{x} of type \texttt{T} and \texttt{y} of type \texttt{U}.
Stagnation detection is performed automatically when calling \texttt{+(in1::T, in2)} or \texttt{+(in1, in2::T)} where \texttt{T} is a stagnation detection type.
First we measure $u_i$ as the number of bits preserved at the current addition, computed as $\max(0, p - |e_a - e_b|)$ where $e_a$, $e_b$ are the exponents of the addends.
Next, we determine if absorption is taking place.
We check $u_i$ relative to the absorption threshold $t$.
If we are checking for total stagnation, the criterion for absorption is stronger than checking $u_i$ is equal to zero.
We check, assuming we are adding $x, y \in \mathds{F}$ where $|x| < |y|$, that $|y| < |\ulp(x)|/2$ to guarantee total absorption.
Afterwards, we compute the appropriate updates to \texttt{absorptions\_[up,dn]}.
Operator function \texttt{+} returns a new \texttt{T} with the updated fields.

Since stagnation detection is automatic and uses a specialised method for the \texttt{+} function, it requires little modification for Julia programs to apply the tool.
For example, by specialising a variable as \texttt{x::Float64SD} instead of \texttt{x::Float64}, stagnation detection will apply for any addition operations that use this or subsequent variables.
When both addends are \texttt{FloatSD}, we have chosen the convention that only the stagnation data from the \emph{left hand side} operand will be copied and updated.
If only the left or right argument is a \texttt{FloatSD}, the data from this argument is copied directly.
When the floating-point addition operation with stagnation detection takes place, the \texttt{u}, \texttt{additions} and \texttt{absorptions\_up}/\texttt{dn} fields are updated to include the data from this addition.

We have provided several user-facing ``diagnostic'' functions.
The simplest is \texttt{report(::FloatSD)}, which prints a summary of the stagnation data in a format that should be readable to the average user.
There is also \texttt{reportall()}, which returns extremely short summaries for all \texttt{FloatSD}s as well as arrays containing \texttt{FloatSD} in the local scope.
We have not found a way for \texttt{reportall()} to work within a function, so it may only be called from the REPL.
We have also provided the following number-valued functions:
\begin{itemize}
  \item \texttt{num\_additions(::FloatSD)} (return \texttt{additions}).
  \item \texttt{num\_absorptions(::FloatSD)} (return \texttt{absorptions\_up + absorptions\_dn}).
  \item \texttt{diff\_absorptions(::FloatSD)} (absolute difference between positive and negative absorptions, unsigned).
  \item \texttt{net\_absorptions(::FloatSD)} (difference between positive and negative absorptions, signed).
\end{itemize}
These functions are provided for compact stagnation analysis. 

We have provided implementations of core mathematical functions in Julia that operate on stagnation detection types the same as they do for real types.
These mathematical functions are \texttt{sin}, \texttt{cos}, \texttt{sinpi}, \texttt{cospi}, \texttt{tan}, \texttt{sinh}, \texttt{cosh}, \texttt{tanh}, \texttt{exp}, \texttt{log}, \texttt{sqrt}, \texttt{abs}, \texttt{abs2}, \texttt{acos}, \texttt{asin}, \texttt{atan}, \texttt{abs}, \texttt{hypot}, \texttt{atan} and \texttt{rem}.
When using \texttt{abs}, stagnation detection data of the input is copied to the output.
Stagnation data cannot be preserved when using any of the other functions mentioned.

\subsubsection{Harmonic series with stagnation detection}

We provide an example for how a user can apply stagnation detection to their Julia code.
Consider the following function, which computes the harmonic series (see proof of Theorem \ref{thm:stagn0} earlier) in format \texttt{T} up to $n$ terms:
\begin{lstlisting}[language=Julia]
function harm(n::Int, T::Type = Float64)
    acc::T = 0.0
    for k in 1:n
        acc += one(T)/k
    end
    return acc
end
\end{lstlisting}
The addends are written as \texttt{one(T)/k} since \texttt{1/k} instead would default to \texttt{Float64} and trigger type promotion for other accumulator types.
To apply stagnation detection, we can simply add \texttt{StagnationDetection} to the preamble, and call \texttt{harm} with our stagnation detection type of choice.
\begin{lstlisting}[language=Julia]
using StagnationDetection

h = harm(4_000_000, Float32SD)
\end{lstlisting}
We can then use \texttt{report} to generate a summary for the user.
\begin{lstlisting}[language=Julia]
report(h)
\end{lstlisting}
\begin{verbatim}
--------------- SUMMARY ----------------
Detecting total stagnation in 4000000 additions.
The condition for total absorption is u = 0 and for the smaller addend
to be rounded off completely in a given addition.
Accumulator precision: 24 (Float32SD).

Net stagnation length 1902848 (round down).
1902848 negative, 0 positive.
Overall stagnation length 1902848 if ignoring cancellation of rounding
directions.

-------- PROCESSING OF RESULTS ---------
u is the total number of bits which were not shifted off in the
addition.
Total number of bits processed u = 2098860.
Average u_i = 0.524715.
See paper: Analysis and Detection of Stagnation in Floating-Point
Summation,
Mantas Mikaitis and William Woolfenden, 2026.

------------ END OF SUMMARY ------------
\end{verbatim}

We discussed stagnation in the harmonic series in Theorem \ref{thm:stagn0}.
By using \texttt{report} we are told how many additions have taken place, as well as how many of those additions count towards stagnation.
Stagnation is reported by all instances, but also by the difference of positively and negatively rounded absorptions, assuming that single instances of absorptions in in opposite rounding directions effectively cancel out.
We are also given the average level of stagnation based on the accumulation of preserved bits \texttt{u} across all additions.
Almost all the output of \texttt{report} is dependent on the properties of the argument, including which kind of stagnation (total, near-total or partial) is being measured.

\subsubsection{Matrix multiplication with stagnation detection}

We also show how stagnation detection can be applied in matrix multiplication, and discuss different ways to fetch information on stagnation.
Matrix multiplication works because the operations \texttt{+(::FloatSD, ::FloatSD)} and \texttt{*(::FloatSD, ::FloatSD)} are defined, which are called by matrix multiply.
We can perform large rectangular matrix multiplication with stagnation detection using the following script:
\begin{lstlisting}[language=Julia]
n = 5
m = 1_000_000

rng = Xoshiro(1)

A = rand(rng, Float32SD, n, m) # uniform distribution 
B = randn(rng, Float32SD, m, n) # normal distribution

C = A * B
\end{lstlisting}
and may then use any of the diagnostic functions for analysis. For example:
\begin{lstlisting}
net_absorptions.(C)
\end{lstlisting}
\begin{verbatim}
5x5 Matrix{Int64}:
 -2    5    8  -2   -6
 -1   -3    0   6  -25
 11  -26   -6  -8    5
  1   -8   -5   6    3
 18    4  -14  -5   14
\end{verbatim}
Each of \texttt{num\_additions}, \texttt{num\_absorptions}, \texttt{diff\_absorptions} and \texttt{net\_absorptions} can be broadcast to apply element-wise to the matrix.
We can also apply \texttt{reportall} to the current example.
\begin{lstlisting}[language=Julia]
reportall()
\end{lstlisting}
\begin{verbatim}
---------  SCALARS IN GLOBAL SCOPE  --------- 

 ----------- ARRAYS IN GLOBAL SCOPE  --------- 
A (Matrix{Float32SD}, 5x1000000), stagnation range 0.0% to 0.0%, mean
0.0%
B (Matrix{Float32SD}, 1000000x5), stagnation range 0.0% to 0.0%, mean
0.0%
C (Matrix{Float32SD}, 5x5), stagnation range 0.003% to 0.018%, mean
0.011%

 -------------- END OF REPORTALL  ------------
\end{verbatim}
There are no scalar variables in the global scope, so none are shown.
For \texttt{C}, \texttt{reportall} uses \texttt{diff\_absorptions} to obtain the minimum, mean and maximum values in the range of stagnation, being the proportion of additions that count towards stagnation.
The matrices \texttt{A} and \texttt{B} are constructed with \texttt{Float32SD} arguments thus show no history with regard to stagnation.
While we used \texttt{net\_absorptions} in the example above to show mixed sign in the stagnation counts, \texttt{reportall} uses \texttt{diff\_absorptions} to identify stagnation without regard to sign.

\section{Results}

\subsection{MATLAB stagnation detection}

\begin{table}[!b]
  \centering
  \caption{Average $u$ in various sums using binary32 arithmetic.}
  \begin{tabular}{lllll}
    \toprule
    & $i=0.5\times 10^6$ & $10^6$ & $1.5\times 10^6$ & $2.5\times 10^6$ \\
    \midrule
    H & 3.05 & 2.05 & 1.39 & 0.83 \\
    IH & 6.39 & 5.7 & 5.18 & 5.01 \\
    R1 & 6.05 & 5.06 & 4.43 & 3.73  \\
    R2 & 22.66 & 22.7 & 22.7 & 22.87 \\
    P & 0.019 & 0.009 & 0.006 & 0.0039 \\
    \bottomrule
  \end{tabular}
  \label{table:average-u}
\end{table}

\begin{table}[!b]
  \centering
  \caption{Relative error and stagnation length in various sums using binary32 arithmetic and $n = 2.5 \times 10^6$ iterations.}
  \begin{tabular}{lllllll}
    \toprule
    & Rel. error (RN) & Stagnation (RN) & Rel. Error (SR) \\
    \midrule
    H & 0.0062 & 1451424 & 7.7742e-05 \\
    IH & 2.7101e-05 & 0 & 3.7974e-06  \\
    R1 & 1.7287e-05 & 138203  & 2.4790e-05 \\
    R2 & 1.4511e-04 & 0 & 5.4876e-06 \\
    P & 1.2666e-04 & 2497104 & 4.7314e-06 \\
    \bottomrule
  \end{tabular}
  \label{table:experiment-stagn}
\end{table}

We test our MATLAB tool on some experiments with individual summations.
To test the progression of $u$ for some basic algorithms, we used the harmonic sum (H): $\sum_{i=1}^{n}1/i$ (which is known to stagnate \cite{malo13,hipr19} as $i \to \infty$); the harmonic sum computed in reverse order starting from $n$ (IH); a sum of random numbers $x_i \in (0, 1)$ (R1); a modified sum of random numbers that resets to zero every time it reaches $1$ (R2); and the sum that converges to $\frac{\pi^2}{6}$ (P): $\sum_{i=1}^{n}i^{-2}$.
For each sum, we are measuring the level of stagnation by computing \texttt{u/additions} as well as the length of stagnation measured in \texttt{total\_absorptions}.
Table~\ref{table:average-u} demonstrates the average $u$ at different points of each running sum for $n=2.5\times 10^6$.
Table~\ref{table:experiment-stagn} additionally shows the combined length of total and near-total stagnation and the relative errors compared with the sum evaluated in binary64 for round-to-nearest ties-to-even (RN)~\cite{ieee19} and stochastic rounding (SR)~\cite{cfhm21} which is known to alleviate stagnation errors.
We have simulated SR with CPFloat~\cite{fami23} which uses 32-bit random number generator for implementing SR.

As expected, problem $P$ has the lowest average $u$, meaning most information was lost, which is expected as most addends are very small due to $i^2$ in the denominator.
The harmonic sum (H) is the second worst, which is also expected since the sum starts with a relatively big quantity of $1$ and is then growing whilst the addends are decreasing.
IH and R1 are very similar, with an average of around $5$ bits being used in the additions.
The R2 problem almost does not have any information loss, with approximately all 23 bits used in additions---this is because we reset the accumulator every time it reaches $1$.

Here we can come back to the parameter $t$, which is a threshold for partial stagnation.
For the purpose of an example, set $t=2$.
In the results of Table~\ref{table:average-u} we can then declare that problem $H$ partially stagnated after $i=10^6$ steps, while problem $P$ stagnated very early in the evaluation.
However, problems IH and R1 also have a decreasing average $u$, which signals that most likely, for bigger $n$, they would also start partially stagnating.
The problem R2 is not looking like it is going to stagnate at all as the average $u$ remains the same irrespective of $i$, as is expected because of accumulation restart.

Table~\ref{table:experiment-stagn} shows that for $H$ and $P$ sums, SR recovers the errors caused by stagnation with the RN mode.
These two problems had more than $50\%$ of all additions take part in total or near-total stagnation and users or precision tuners here could decide to apply SR.
Whilst $IH$ and $R2$ have no stagnation, the error is reduced by SR due to it having the property of $\sqrt{n}$ error growth~\cite{chm21}.
While $R1$ has a small proportion of stagnation compared with $n$, SR does not appear to reduce the error.

Theorem \ref{thm:stagn0} tells us that stagnation can occur if the addends are decreasing or remain constant, which is reflected by the data in the sums (H) and (P).
While in real arithmetic (P) converges but (H) doesn't, in floating-point both involve decreasing data that leads to stagnation.
The (IH) sum performs ascending order summation, but theorem \ref{thm:stagn1} does not apply because the difference in magnitude between addends is not great enough for stagnation to occur.
None of our theory concerns the random sums (R1) and (R2), which involve summation on data sorted randomly.

\subsection{Julia stagnation detection}

All computations in this section\footnote{
  https://github.com/willwoolf/stagnationtesting
} were performed on a Dell Latitude 7450 computer with an Intel Core Ultra 7 165H and 32GB of LPDDR5X memory.
Since stagnation detection is intended to be a low cost solution for detecting significant accumulation of floating-point rounding errors, we have collected data on a standard computer to reflect the intended use case.
The same applies for the benchmarks discussed in the section afterwards.

\subsubsection{Verification of conditions for stagnation}

Theorem \ref{thm:decsum-lb} provides a bound on a sequence in order to guarantee that total stagnation will not occur.
We have verified this with numerical experiments, using \texttt{StagnationDetection.jl}.
We start by constructing a vector of summands.
The most significant case is the vector starting with $x>0$ in the first entry, and descending by consecutive floating-point number until $\operatorname{nextfloat}(x/2)$.
If adjacent values had a larger difference, there would be fewer terms in the sum.
If negative values were present in the sum, stagnation would be less likely since this would oppose growth of the accumulator.
We have tested this in single and half-precision for the greatest possible range of values and have observed that total stagnation does not occur, in agreement with the theorem.

\subsubsection{Summation of random numbers}

\begin{figure}[b!]
  \centering
  \begin{tikzpicture}
  \begin{groupplot}[
    group style={
      group size=2 by 2,
      horizontal sep=1.5cm,
      vertical sep=1.5cm,
    },
    width=6cm,
    height=5cm,
    legend pos = {north west},
    legend style={legend cell align = {left}}
    ]

    \nextgroupplot[
      title = {Stagnation ($\%$)},
      xmode = {log},
      xlabel = {$n$},
      ymin = 0,
      ymax = 100,
      ylabel={binary32},
      enlarge y limits = {0.1}
    ]
    
    \addplot[color = cyan, dashed, mark = square, mark options={solid}]
    table[
    x = {n},
    y = {stagf(asc)},
    col sep = comma
    ]{stagu32.dat};
    \addlegendentry{Ascending}

    \addplot[color = blue, dashdotted, mark = triangle, mark options={solid}]
    table[
      x = {n},
      y = {stagf(desc)},
      col sep = comma,      
    ]{stagu32.dat};
    \addlegendentry{Descending}

    \addplot[color = black, dotted, mark = o, mark options={solid}]
    table[
      x = {n},
      y = {stagf(rand)},
      col sep = comma
    ]{stagu32.dat};
    \addlegendentry{Random}

    \nextgroupplot[
      title = {Relative error},
      xmode = {log},
      ymode = {log},
      xlabel = {$n$},
      ymin = 0,
      ymax = 1,
      enlarge y limits = {0.1}
    ]

    \addplot[color = cyan, dashed, mark = square, mark options={solid}]
    table[
      x = {n},
      y = {relerr(asc)},
      col sep = comma
    ]{stagu32.dat};

    \addplot[color = blue, dashdotted, mark = triangle, mark options={solid}]
    table[
      x = {n},
      y = {relerr(desc)},
      col sep = comma
    ]{stagu32.dat};

    \addplot[color = black, dotted, mark = o, mark options={solid}]
    table[
      x = {n},
      y = {relerr(rand)},
      col sep = comma
    ]{stagu32.dat};

    \nextgroupplot[
      title = {},
      xmode = {log},
      xlabel = {$n$},
      ymin = 0,
      ymax = 100,
      ylabel={binary16},
      enlarge y limits = {0.1}
    ]

    \addplot[color = cyan, dashed, mark = square, mark options={solid}]
    table[
      x = {n},
      y = {stagf(asc)},
      col sep = comma
    ]{stagu16.dat};

    \addplot[color = blue, dashdotted, mark = triangle, mark options={solid}]
    table[
      x = {n},
      y = {stagf(desc)},
      col sep = comma
    ]{stagu16.dat};

    \addplot[color = black, dotted, mark = o, mark options={solid}]
    table[
      x = {n},
      y = {stagf(rand)},
      col sep = comma
    ]{stagu16.dat};

    \nextgroupplot[
      title = {},
      xmode = {log},
      ymode = {log},
      xlabel = {$n$},
      ymin = 0,
      ymax = 1,
      enlarge y limits = {0.1}
    ]

    \addplot[color = cyan, dashed, mark = square, mark options={solid}]
    table[
      x = {n},
      y = {relerr(asc)},
      col sep = comma
    ]{stagu16.dat};

    \addplot[color = blue, dashdotted, mark = triangle, mark options={solid}]
    table[
      x = {n},
      y = {relerr(desc)},
      col sep = comma
    ]{stagu16.dat};

    \addplot[color = black, dotted, mark = o, mark options={solid}]
    table[
      x = {n},
      y = {relerr(rand)},
      col sep = comma
    ]{stagu16.dat};
  \end{groupplot}
  \end{tikzpicture}
  \caption{
    Stagnation and relative error for vector summation of the uniform $U[0, 1]$ distribution.
    Recursive summation is performed in ascending, descending and random ordering in both single (binary32) and half precision (binary16).
    Summation is performed once per ordering on randomly generated data.
  }
  \label{fig:uniformvecsums}
\end{figure}

We perform recursive summation of randomly generated vectors with entries sampled from the uniform distribution over $[0, 1]$ in \texttt{binary32} single precision and \texttt{binary16} half precision.
In theory, we would expect stagnation to be minimal for summation in ascending ordering, since values of smallest magnitude will be accumulated earliest.
In all cases, sorting is performed by absolute value.
Relative error is computed in reference to the same summation performed in \texttt{binary64} double precision.

We performed summation up to length $N = 2.56 \times 10^8$ in single precision and up to $N = 4 \times 10^5$ in half precision.
Recursive summation is performed on the vector of length $N$, interrupting the summation at given lengths $n<N$ to measure stagnation and relative error.

We provide results for vector summations in Figure \ref{fig:uniformvecsums}.
For both precisions tested, we observe that stagnation is consistently minimal in ascending ordering.
With ascending and descending ordering, stagnation is prevented until a point in the summation where the accumulator has grown significantly, and stagnation will occupy most if not all of the remaining summation.
This occurs in descending order for smaller $n$ than in ascending, since larger values are accumulated earlier in descending ordering and not all data has been accumulated.
Once this point is reached in descending order summation, the growth of stagnation is greater than any other ordering since stagnation will occur for the entire remaining sum.
After this point, stagnation in descending order is greater than in random.
In random ordering, growth of stagnation is linear and non-zero for much smaller $n$ than the ordered summations.
The growth of the accumulator can be expected to be linear, so for a given $n$ we can estimate how small the next value from $U[0, 1]$ must be in order to be absorbed in the next addition.

Relative error in the summation grows sharply and reaches a plateau near $1$ for all orderings after $n \approx 6.4\times 10^7$ in single precision and $n \approx 10^4$ in half precision.
This corresponds mostly to regions where stagnation is reported to occupy greater than $50\%$ of the summation.
We can clearly identify from the figures that for descending and random ordering, there is a link between high relative error and high stagnation.
When stagnation occupies such a large proportion of the sum, such as $50\%$ or more, this means the information being lost due to stagnation can be comparable to the size of the accumulator itself.
As this proportion of stagnation grows, more values are rounded off and the accumulator stagnates while the actual value of the sum grows, increasing error.
We observe that this occurs in all orderings, but the reported rate of stagnation in ascending ordering is consistently lower than that of descending and random.
However, stagnation in ascending ordering corresponds to the largest values in the sum being absorbed, introducing the greatest possible error per absorption.

For values smaller than $n \approx 6.4\times 10^7$ in single precision, where the plateau in relative error occurs, the effectiveness of stagnation detection as an indication of error is not clear.
In single precision, random ordering has consistently the best or tied-best error despite being the only ordering that reports some stagnation for this range of $n$.
We could deduce that for summations where stagnation occupies a proportion of the sum below a certain threshold, error may occur from other sources and stagnation detection may not be useful at indicating error.
In half precision there appears to be no best ordering for reducing relative error for all $n$.
However, for $n = 100$, $200$ and $400$, ascending order summation in half precision attains zero error, which is not visible on the logarithmic axis.
When stagnation is not significantly high, the patterns in relative error may also be attributed to the randomness of the data and subseqently the summation.

Stagnation in descending order summation for uniform distribution data follows the idea of Theorem \ref{thm:stagn0}.
As the addends are decreasing and the accumulator is growing, stagnation will occur after a sufficient number of additions.
In this case stagnation will continue as the addends will continue to decrease in magnitude.
We also tested stagnation detection on vector summation of values from the normal distribution (Gaussian) with mean $\mu = 0$ and variance $\sigma^2 = 1$, however we found that summation on mixed sign data leads to no or very little stagnation.
Stagnation is not impossible for this summation data, but in our testing it is consistently much less than $1\%$ of the summation and therefore insignificant.
None of our theory on preventing stagnation (Theorem \ref{thm:decsum-lb}, Proposition \ref{prp:decsum-nostag}) applies, and neither theorems \ref{thm:stagn0} or \ref{thm:stagn1} apply to mixed sign data.

\subsubsection{Inner products}

\begin{figure}[b!]
  \centering
  \begin{tikzpicture}
  \begin{groupplot}[
    group style={
      group size=2 by 2,
      horizontal sep=1.5cm,
      vertical sep=1.5cm,
    },
    width=6cm,
    height=5cm,
    xlabel={},
    legend pos = {north west},
    legend style={legend cell align = {left}}
    ]

    \nextgroupplot[
      title = {Stagnation ($\%$)},
      xmode = {log},
      xlabel = {$n$},
      ymin = 0,
      ymax = 100,
      ylabel={binary32},
      enlarge y limits = {0.1}
    ]
    
    \addplot[color = cyan, dashed, mark = square, mark options={solid}]
    table[
    x = {n},
    y = {stagf(asc)},
    col sep = comma
    ]{staguu32.dat};
    \addlegendentry{Ascending}

    \addplot[color = blue, dashdotted, mark = triangle, mark options={solid}]
    table[
      x = {n},
      y = {stagf(desc)},
      col sep = comma,      
    ]{staguu32.dat};
    \addlegendentry{Descending}

    \addplot[color = black, dotted, mark = o, mark options={solid}]
    table[
      x = {n},
      y = {stagf(rand)},
      col sep = comma
    ]{staguu32.dat};
    \addlegendentry{Random}

    \nextgroupplot[
      title = {Relative error},
      xmode = {log},
      ymode = {log},
      xlabel = {$n$},
      ymin = 0,
      ymax = 1,
      enlarge y limits = {0.1}
    ]

    \addplot[color = cyan, dashed, mark = square, mark options={solid}]
    table[
      x = {n},
      y = {relerr(asc)},
      col sep = comma
    ]{staguu32.dat};

    \addplot[color = blue, dashdotted, mark = triangle, mark options={solid}]
    table[
      x = {n},
      y = {relerr(desc)},
      col sep = comma
    ]{staguu32.dat};

    \addplot[color = black, dotted, mark = o, mark options={solid}]
    table[
      x = {n},
      y = {relerr(rand)},
      col sep = comma
    ]{staguu32.dat};

    \nextgroupplot[
      title = {},
      xmode = {log},
      xlabel = {$n$},
      ymin = 0,
      ymax = 100,
      ylabel={binary16},
      enlarge y limits = {0.1}
    ]

    \addplot[color = cyan, dashed, mark = square, mark options={solid}]
    table[
      x = {n},
      y = {stagf(asc)},
      col sep = comma
    ]{staguu16.dat};

    \addplot[color = blue, dashdotted, mark = triangle, mark options={solid}]
    table[
      x = {n},
      y = {stagf(desc)},
      col sep = comma
    ]{staguu16.dat};

    \addplot[color = black, dotted, mark = o, mark options={solid}]
    table[
      x = {n},
      y = {stagf(rand)},
      col sep = comma
    ]{staguu16.dat};

    \nextgroupplot[
      title = {},
      xmode = {log},
      ymode = {log},
      xlabel = {$n$},
      ymin = 0,
      ymax = 1,
      enlarge y limits = {0.1}
    ]

    \addplot[color = cyan, dashed, mark = square, mark options={solid}]
    table[
      x = {n},
      y = {relerr(asc)},
      col sep = comma
    ]{staguu16.dat};

    \addplot[color = blue, dashdotted, mark = triangle, mark options={solid}]
    table[
      x = {n},
      y = {relerr(desc)},
      col sep = comma
    ]{staguu16.dat};

    \addplot[color = black, dotted, mark = o, mark options={solid}]
    table[
      x = {n},
      y = {relerr(rand)},
      col sep = comma
    ]{staguu16.dat};
  \end{groupplot}
  \end{tikzpicture}
  \caption{
    Stagnation and relative error for vector dot products of random vectors from the uniform $U[0, 1]$ distribution.
    The two input vectors are multiplied together element-wise to obtain a single vector to perform summation on.
    Recursive summation is performed in ascending, descending and random ordering in both single (binary32) and half precision (binary16).
    Summation is performed once per ordering.
  }
  \label{fig:uniformdots}
\end{figure}

We extend the previous example to test dot products of uniformly-distributed random vectors, shown in Figure \ref{fig:uniformdots}.
Most of our analysis of the equivalent vector summation carries over, such as the rate of growth of stagnation in different orderings.
This result is effectively a vector sum of a different random distribution obtained by taking the product of two identical random variables on $U[0,1]$.
Differences include the rate of growth of relative error in single precision with random ordering, which obeys a visible linear progression.
Descending order sorting leads to greater stagnation than was observed with the equivalent vector sums, in both single and half precision.
However overall trends of stagnation and of relative error are very similar, such as where half precision ascending summation may attain zero error for small $n$, and the regions where large stagnation appears to correlate with large relative error.

\subsubsection{Matrix-matrix multiplication}

\begin{table}[b!]
  \centering
  \caption{
    Stagnation (mean) observed in matrix multiplication of $(A \in \mathds{F}^{5 \times n}) \times (B \in \mathds{F}^{n \times 5})$ uniformly distributed random matrices in single precision.
  }
  \begin{tabular}{llllll}
    \toprule
    Summation length & $n = 2^{22} \approx 4 \times 10^6$ & $2^{23}$  & $2^{24}$ & $2^{25}$ & $2^{26}$ \\
    \midrule
    Stagnation & $9.8\%$      & $16.7\%$  & $27.5\%$ & $43.3\%$ & $63.0\%$ \\
    \bottomrule
  \end{tabular}
  \label{tab:stagmatmul32}
\end{table}

\begin{table}[b!]
  \centering
  \caption{
    Stagnation (mean) observed in matrix multiplication of $(A \in \mathds{F}^{5 \times n}) \times (B \in \mathds{F}^{n \times 5})$ uniformly distributed random matrices in half precision.
  }
  \begin{tabular}{llllll}
    \toprule
    Summation length & $n = 2^9 = 512$ & $1024$   & $2048$   & $4096$   & $8192$ \\
    \midrule
    Stagnation & $9.4\%$   & $16.7\%$ & $27.6\%$ & $43.3\%$ & $62.7\%$ \\
    \bottomrule
  \end{tabular}
  \label{tab:stagmatmul16}
\end{table}

For the examples on vector summations and dot products, we produced vectors up to in the magnitude of $10^8$.
Large matrix multiplication with stagnation detection is challenging due to its computational cost.
Instead here we show rectangular matrix multiplication with one dimension fixed by performing multiplication of $A \in \mathds{R}^{m \times n}$ and $B \in \mathds{R}^{n \times m}$ where $n$ is large but $m$ is small.
We pick $m = 5$ and values $n$ from $2^{p-2}$ to $2^{p+2}$, where $p$ is the precision of the format being used.
While this leads us to pick different matrix sizes for \texttt{Float16} and \texttt{Float32} with no overlap, we observe that between both formats the rate of stagnation depends on the length of the summation relative to the precision.
Tables \ref{tab:stagmatmul32} and \ref{tab:stagmatmul16} show the stagnation for matrix multiplication of these dimensions.
Each example is equivalent to computing $m \times m$ dot products of length $n$.
The rate of growth of stagnation obtained from these results is less than linear.
This is because for a recursive sum on uniform distribution data, the probability of absorption in subsequent additions is increasing with the accumulator but this does not affect the previous additions.

\subsection{Shallow water simulation stagnation detection}

\subsubsection{Implementation choice and approach}

{ShallowWaters.jl} \cite{kdp20} is a Julia package for shallow fluid dynamics, which can be used for climate modelling.
We have decided to test stagnation detection when using ShallowWaters, to determine whether stagnation is a meaningful metric for floating-point reliability in a type-flexible scientific computation that performs time integration.
The default parameters for ShallowWaters define a fluid layer in a $4000 \mathrm{km} \times 2000 \mathrm{km}$ domain, $500\mathrm{m}$ thick at rest, subject to boundary conditions, over a $200$ day timespan.
We run this simulation on types \texttt{Float32SD} and \texttt{Float16SD} to measure stagnation, and compare against \texttt{Float64} to measure relative error, while all other options are constant.
The simulation returns the sea surface temperature \texttt{sst}, the horizontal and vertical current velocities \texttt{u}, \texttt{v} and the layer height {$\eta$}, which are referred to as the \textit{prognostic variables}.
We measure stagnation and root-mean-squared-error (RMSE) for the result of the computation of layer height $\eta$, comparing the result to an equivalent computation performed with entirely \texttt{Float64} arithmetic.
This approach is used in the developers' own work to measure error \cite{kdp20}.
We choose to not measure element-wise relative error of the result, as we have inspected relative error measures, and have found that the varying dynamic range of the fluid layer height in the simulation leads to relative error being represented in a matrix with extremely large ($\gg 1$) and small values.
We did not observe significant stagnation in the computation of the other prognostic variables in our tests, but this could be investigated by future work.

As mentioned, our motivation for testing with ShallowWaters is due to its flexibility for operating with different floating-point types.
Other packages for fluid simulation and climate modelling in Julia are \texttt{Oceananigans.jl} \cite{Oceananigans-overview-paper-2025} and \texttt{SpeedyWeather.jl} \cite{SpeedyWeatherJOSS} among others.
Groups such as \texttt{CliMA}, \texttt{JuliaClimate} and \texttt{JuliaOcean} are responsible for these software packages.
\texttt{SpeedyWeather.jl} shares a lot in common with ShallowWaters, namely type flexibility due to a focus on performance.
Oceananigans is designed with a focus on GPU arithmetic, which is beyond the scope of our current work.
Overall, while we are focusing on ShallowWaters we cannot provide any reason as to why these other tools would not yield results from stagnation detection.

\subsubsection{Configuration of the shallow water model}

We have run the default model simulation provided in \texttt{ShallowWaters.jl} with \texttt{Float16} arithmetic with stagnation detection.
However we have made several changes for our testing.
For performing stagnation detection, we set the type \texttt{Tprog} to \texttt{Float16SD} and stagnation detection is performed automatically for the \textit{prognostic variables} \texttt{u}, \texttt{v}, \texttt{$\eta$} and \texttt{sst}.
The simulation integrates the change in the prognostic variables within a $(\mathtt{nx} = 100) \times (\mathtt{nx}/2 = 50)$ matrix by default, used to represent the $4000 \mathrm{km} \times 2000 \mathrm{km}$ domain.
Usually, \texttt{ShallowWaters.jl} applies a scaling to \texttt{u}, \texttt{v}, and \texttt{sst} to improve stability for types with reduced dynamic range.
We disable this by setting all scaling values to $1$, as for purposes of stagnation detection, we do not want to mitigate issues caused by the relatively small dynamic range of low precision floating-point types like \texttt{Float16}.
However in our results we have not noticed a difference in the error of the results depending on whether scaling is or is not active, and the value $\eta$ that we are measuring stagnation and error of has no scaling directly applied to it.

\subsubsection{Stagnation and error in the time integration}

\begin{figure}[b!]
  \centering
  \begin{tikzpicture}
    \begin{axis}[
      title = {},
      xlabel = {Timestep $\mathrm{d}t$},
      ylabel = {Error (RMSE)},
      xmode = {log},
      ymode = {log},
    ]
    \addplot[color = blue, mark = o]
    table[
      x = {dt},
      y = {rmse},
      col sep = comma
    ]{swjl_fp64conv.dat};

    \end{axis}
  \end{tikzpicture}
  \caption{
    Convergence of error in ShallowWaters.jl in \texttt{Float64} arithmetic as timestep $\mathtt{dt}$ decreases.
    Root mean squared error is computed between the current simulation with timestep $\mathtt{dt}$ and the next simulation with timestep $\approx \mathtt{dt}/2$.
  }
  \label{fig:shallow64}
\end{figure}

The default integration scheme used in \texttt{ShallowWaters.jl}, which we are measuring stagnation and error of, is a fourth-order Runge-Kutta method \cite{butcher16rk}.
At each timestep, four explicit forward integration steps are computed.
A linear combination of these evaluations is taken such that the result is an integration step with error $\mathcal{O}(\mathtt{dt}^4)$.
Some approaches add together the appropriate linear combination of RK integration steps first, then add this to the value of the integrated variable.
\texttt{ShallowWaters.jl} instead adds each value when computed, so each one step of integration with RK4 records four additions, all of which are measured with stagnation detection.

\begin{figure}[b!]
  \centering
  \begin{subfigure}[b]{0.475\textwidth}
    \centering
    \includegraphics[width=\textwidth]{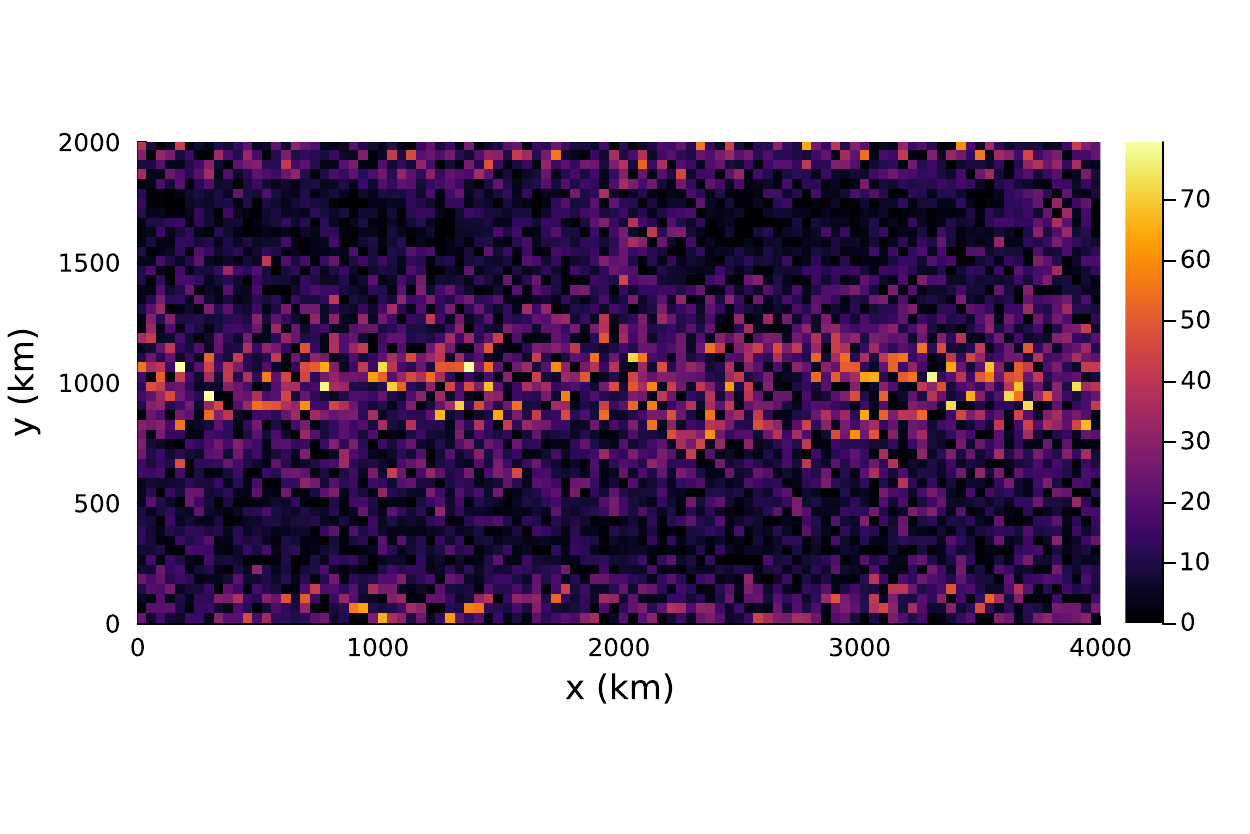}
  \end{subfigure}
  \hfill
  \begin{subfigure}[b]{0.475\textwidth}
    \centering
    \includegraphics[width=\textwidth]{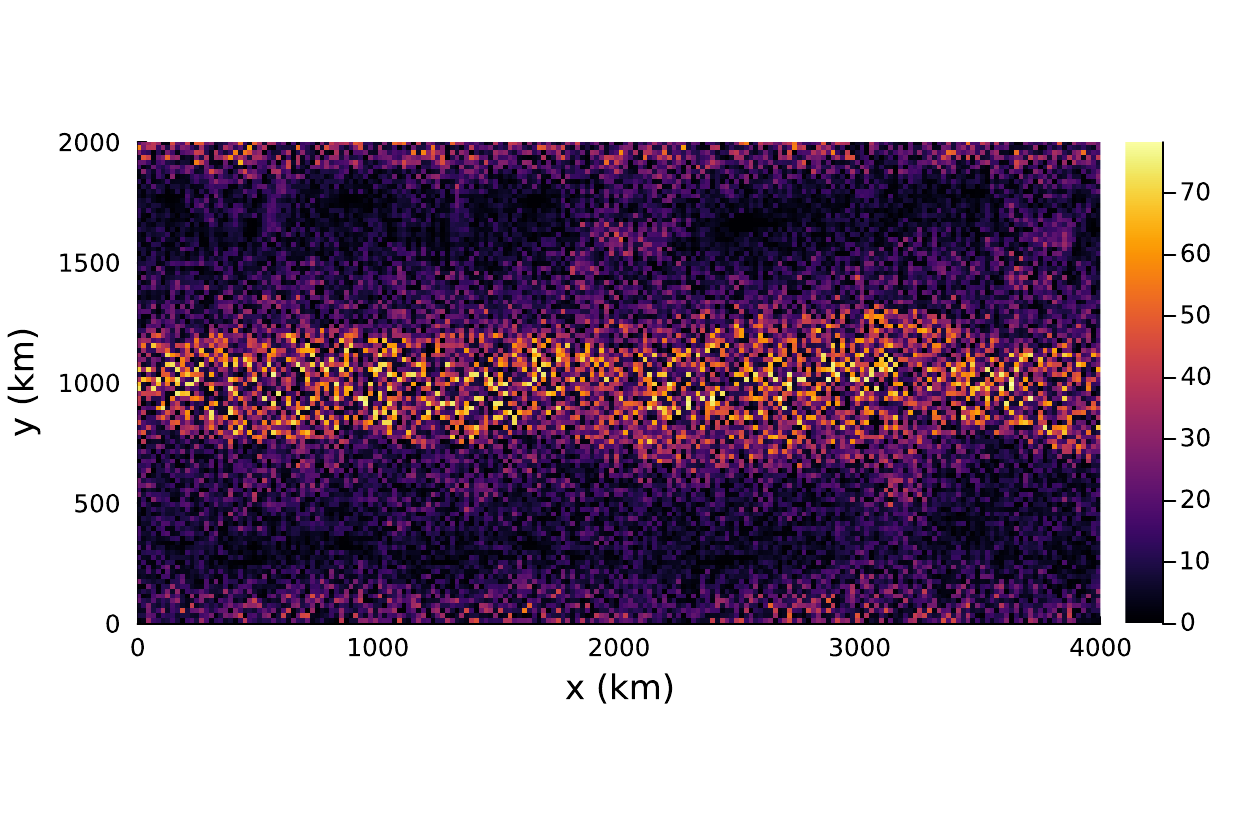}
  \end{subfigure}
  \caption{
    Stagnation of \texttt{ShallowWaters.jl} simulation measuring layer height $\eta$ using \texttt{Float16} arithmetic.
    Left column: default grid resolution $nx = 100$, right column: increased grid resolution $nx = 200$.
    Timestep $\mathtt{dt} = 176$ ($392728$ floating-point additions per entry) reduced to cause stagnation.
    Default timestep is $5656$ for $nx = 100$, $2828$ for $nx = 200$.
  }
  \label{fig:shallowstagmap}
\end{figure}

In \texttt{ShallowWaters.jl}, one of the optional parameters is the CFL number, which controls the ratio between the time and length discretisations.
By keeping $\mathtt{nx}$ fixed and adjusting the CFL number, we can control the timestep independently of other problem parameters.
The default configuration with $\mathtt{nx} = 100$ and $\mathtt{cfl} = 0.9$ integrating for $\texttt{Ndays} = 200$ days, which forms a default timestep $\mathtt{dt} = 5091$ seconds.
We test values of $\mathtt{cfl}$ from $1.0$ down to $2^{-9}$ which produces timestep sizes from $5656$ to $11$ seconds.
We verified that the result computed by \texttt{ShallowWaters.jl} is consistent in \texttt{Float64} arithmetic for different timestep values, and converges as the timestep size decreases, shown in Figure \ref{fig:shallow64}.
The most significant measurement of stagnation we observed is for $\mathtt{dt} = 176$, shown in Figure \ref{fig:shallowstagmap}.
Stagnation is signficant in many cases, where it exceeds $70\%$ of the summation in some parts of the domain.
At our smallest timestep measured $\mathtt{dt} = 11$, more than $6\times 10^6$ additions are performed for every entry in the solution matrix.

\begin{figure}[b!]
  \centering
  \begin{tikzpicture}
    \begin{axis}[
      axis y line = {right},
      title = {},
      xlabel = {Timestep $\mathrm{d}t$},
      ylabel = {Error (RMSE)},
      xmode = {log},
      ymode = {linear},
      enlarge y limits={0.1}
    ]
    \addplot[color = red, mark = x]
    table[
      x = {dt},
      y = {rmse},
      col sep = comma
    ]{swjl_stagerr.dat};

    \addplot[color = red, dashed, mark = x]
    table[
      x = {dt},
      y = {rmse_comp},
      col sep = comma
    ]{swjl_stagerr.dat};

    \end{axis}

    \begin{axis}[
      axis y line = {left},
      title = {},
      ylabel = {Mean stagnation (\%)},
      xmode = {log},
      ymode = {linear},
      enlarge y limits={0.1}
    ]
    \addplot[color = blue, mark = o]
    table[
      x = {dt},
      y = {stagnation},
      col sep = comma
    ]{swjl_stagerr.dat};
      
    \end{axis}
  \end{tikzpicture}
  \caption{
    Mean stagnation (blue) and RMSE (red) of \texttt{ShallowWaters.jl} simulations in \texttt{Float16} arithmetic with $\mathtt{nx} = 100$ grid resolution for different values of timestep $\mathtt{dt}$.
    Alternative RMSE (red, dashed) is obtained by performing the same computation in \texttt{Float16} arithmetic with compensated summation.
    Reference computation compared to every \texttt{Float16} result is run in entirely \texttt{Float64} with timestep $\mathtt{dt} = 11$.
  }
  \label{fig:shallowstagplot}
\end{figure}

We show stagnation and error for \texttt{ShallowWaters.jl} together in Figure \ref{fig:shallowstagplot}.
The simulation is run normally in \texttt{Float16} arithmetic with stagnation detection to obtain measures of RMSE error and stagnation, and also with compensated summation \cite{high93s} to measure RMSE having applied a method to reduce stagnation.
For \texttt{Float16} arithmetic, RMSE and stagnation appear to grow at a similar rate as the timestep $\mathtt{dt}$ decreases.
The growth in RMSE with \texttt{Float16} does not occur with \texttt{Float64} arithmetic.
The value of mean stagnation is relatively low even at its maximum, but we attribute this to the structure of stagnation, which appears sparse when considering the whole domain (see Figure \ref{fig:shallowstagmap}).
The measure of RMSE reaches a plateau for values $\mathtt{dt} \leq 353$, and the most significant mean stagnation is for $\mathtt{dt} = 176$.
This growth would appear to show a strong relationship between stagnation and error.
However, for progressively smaller timesteps, we see the measure of stagnation drop down while the error remains the same, which may imply that error in \texttt{ShallowWaters.jl} with \texttt{Float16} arithmetic is coming from sources other than stagnation.
The measure of error is consistently large here, order $\sim10^1$, for small values of $\mathtt{dt}$.
For the simulation run with compensated summation \cite{high93s} to prevent stagnation, the RMSE error is greatly reduced.
Compensated summation is available as a feature in ShallowWaters.jl.
We do not measure stagnation with compensated summation enabled since the summation algorithm is more complex.

\begin{figure}[b!]
  \centering
  \begin{subfigure}[b]{0.3\textwidth}
    \includegraphics[width=\textwidth]{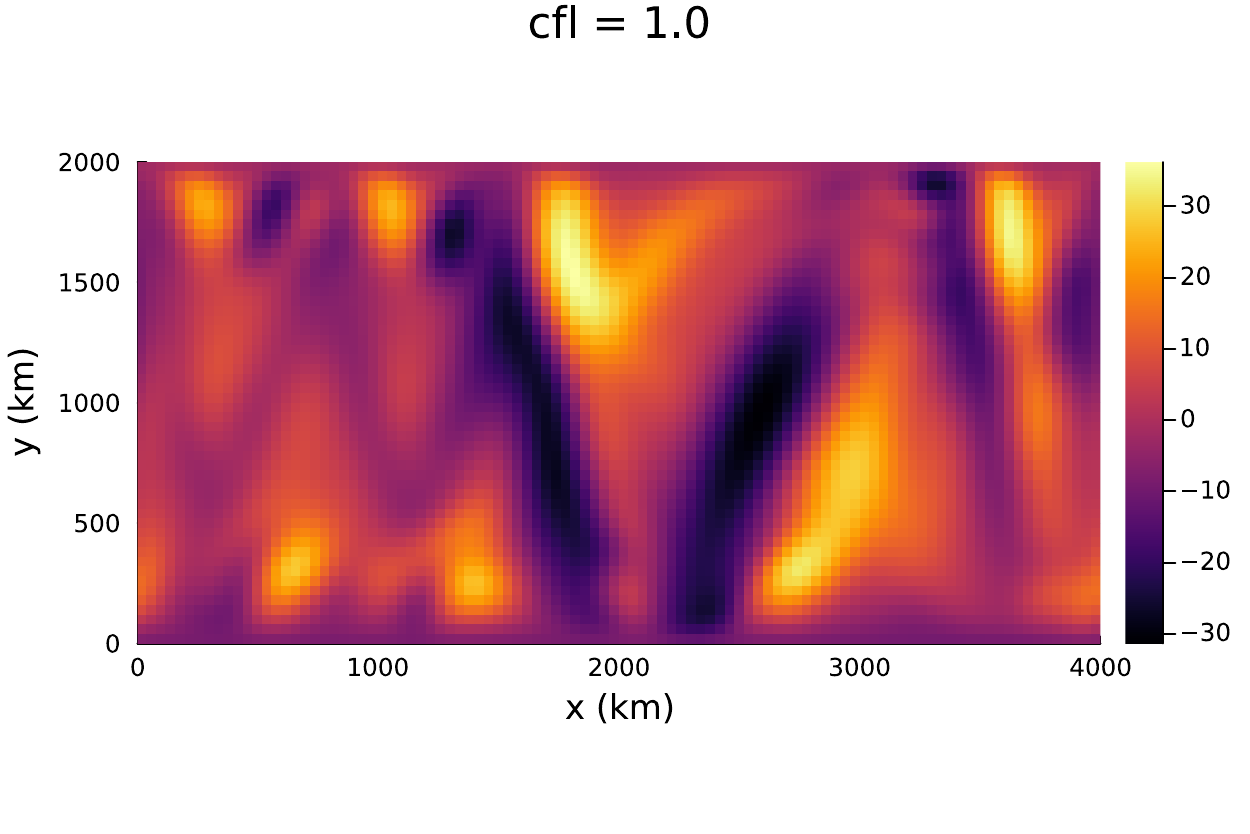}
  \end{subfigure}%
  \hfill%
  \begin{subfigure}[b]{0.3\textwidth}
    \includegraphics[width=\textwidth]{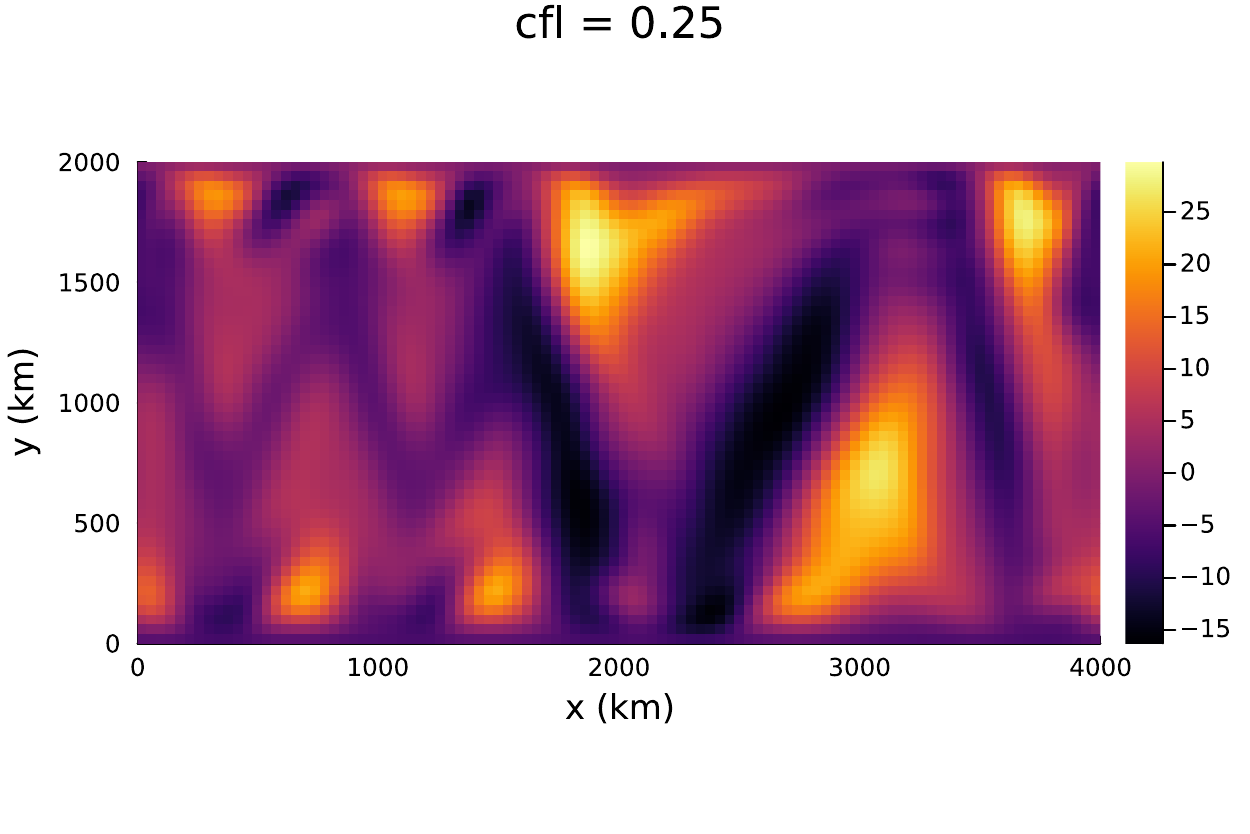}
  \end{subfigure}%
  \hfill%
  \begin{subfigure}[b]{0.3\textwidth}
    \includegraphics[width=\textwidth]{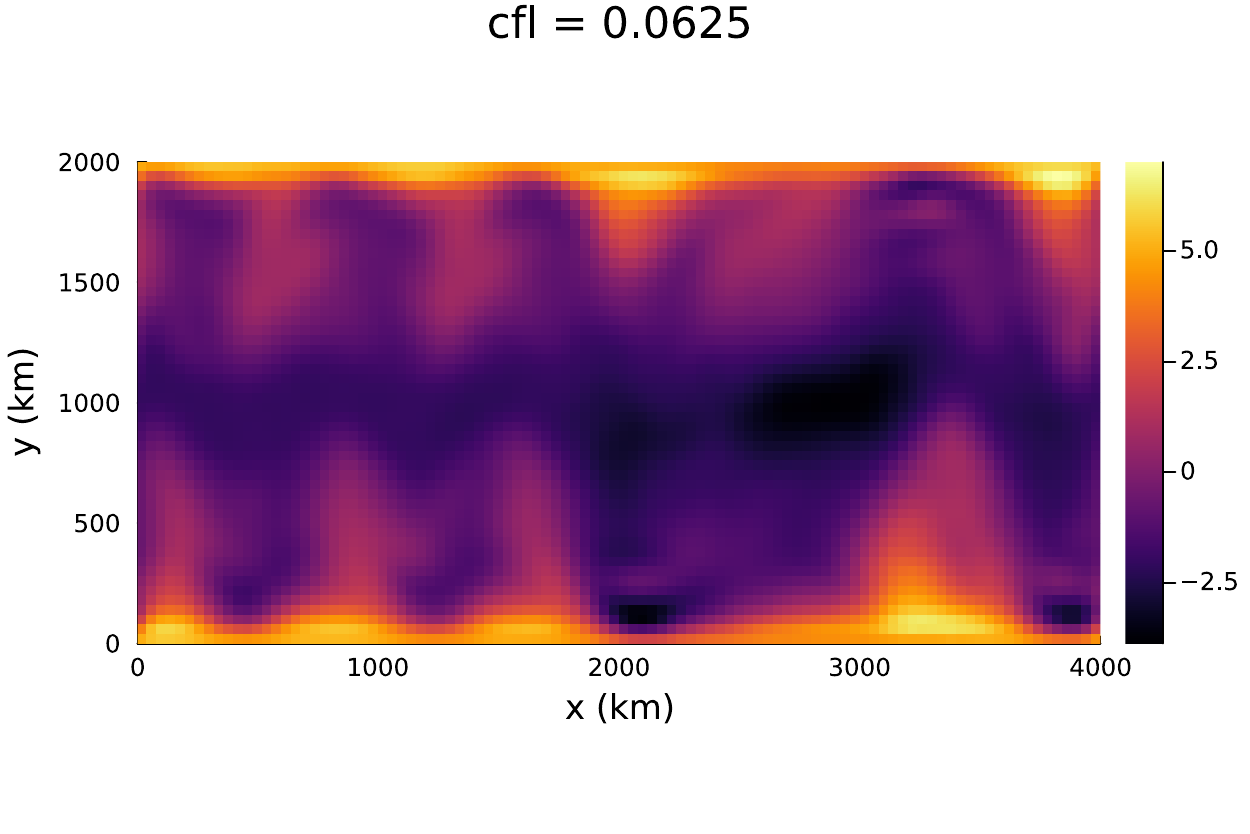}
  \end{subfigure}
  \caption{
    Shallow Waters surface layer height $\eta$ computed in \texttt{Float16} arithmetic for progressively smaller timestep sizes.
    Left: $\mathtt{dt} = 5656$. Middle: $\mathtt{dt} = 1414$. Right: $\mathtt{dt} = 353$.
    By RMSE, solution diverges from the \texttt{Float64} reference as the timestep size (controlled by $\mathtt{cfl}$) decreases. 
  }
  \label{fig:shallow16diverge}
\end{figure}

In \texttt{Float16} arithmetic the solution does not converge as the timestep is reduced.
Figure \ref{fig:shallow16diverge} shows plots of surface height $\eta$ for progressively decreasing $\mathtt{cfl}$. 
The divergence reflects the RMSE growth seen in Figure \ref{fig:shallowstagplot} as the timestep is decreased.
Running this simulation with compensated summation enabled prevents this divergence.

We see from our results that \texttt{ShallowWaters.jl} has significant stagnation for a given range of timesteps $\mathtt{dt}$.
In these cases, there is a sparse but significant horizontal region in the simulation where stagnation may affect more than $70\%$ of addition operations (Figure \ref{fig:shallowstagmap}).
The prevalence of stagnation could indicate to a user that there may be significant error in the computation.
We have compared this to RMSE as a single quantifier for the error of the computation.
The computation in \texttt{Float16} arithmetic measures RMSE error which is high for a signficant range of the small timesteps chosen, which could imply error is produced from sources other than stagnation.
In this result, high stagnation appears where error is large, but error is not always caused by stagnation.
Enabling compensated summation leads to a significant reduction in RMSE across the whole range of timestep values.
Compensated summation also mitigates stagnation.
Therefore \texttt{ShallowWaters.jl} is an example where preventing stagnation does lead to improved computational error, justifying stagnation detection as a tool.

However, it is worth mentioning that we reduce the timestep in order to cause stagnation in \texttt{Float16}, but stagnation instead decreases for smaller timesteps $\mathtt{dt} < 100$ and the solution accuracy does not improve.
For summations with extremely small timestep, the number of additions leads to a large accumulation of rounding error which dominates in \texttt{Float16}.
Reducing the size of the timestep to improve accuracy of the solution works in \texttt{Float64}, but in \texttt{Float16} requires a method to improve stability such as compensated summation.
Even then, we have shown there are configurations of \texttt{ShallowWaters.jl} where stagnation is relatively prominent, and enabling compensated summation here reduces the measure of error, justifying the use of stagnation detection.

\subsection{Benchmarking performance of stagnation detection}

All the benchmarks in this section were performed by replacing the Julia implementations \texttt{Base: sum}, \texttt{LinearAlgebra: dot} and \texttt{LinearAlgebra: *(::Matrix, ::Matrix)} with our own functions \texttt{lrsum}, \texttt{lrdot} and \texttt{lrmatmul}.
It can be observed from the results shown, both with and without stagnation detection, that the time complexity for each algorithm is as expected.
For each algorithm performed without stagnation detection, our own functions are much slower than their counterparts already implemented in Julia.

\subsubsection{Sums and vector inner products}

\begin{table}[b!]
  \centering
  \caption{
    Mean runtime of vector summation in double precision with and without stagnation detection on an Intel Ultra 7 165H CPU.
  }
  \begin{tabular}{llll}
  \toprule
  Summation length N & {Float64}               & {Float64, stag. detec.} & {Slowdown} \\                    
  \midrule
  1000000            & $409.500\mu \mathrm{s}$ & $5.719\mathrm{ms}$  & $14.0\times$ \\
  2000000            & $870.393\mu \mathrm{s}$ & $11.667\mathrm{ms}$ & $13.4\times$ \\
  4000000            & $1.654\mathrm{ms}$      & $24.471\mathrm{ms}$ & $14.8\times$ \\
  8000000            & $3.430\mathrm{ms}$      & $51.642\mathrm{ms}$ & $15.1\times$ \\                       
  \bottomrule
  \end{tabular}
  \label{tab:benchsum}
\end{table}

\begin{table}[b!]
  \centering
  \caption{
    Mean runtime of vector dot products in double precision with and without stagnation detection on an Intel Ultra 7 165H CPU.
  }
  \begin{tabular}{llll}
  \toprule
  {Summation length N} & {Float64}              & {Float64, stag. detec.}    & {Slowdown} \\
  \midrule
  1000000            & $821.265\mu \mathrm{s}$ & $7.752\mathrm{ms}$  & $9.4\times$ \\
  2000000            & $1.618\mathrm{ms}$      & $15.851\mathrm{ms}$ & $9.8\times$ \\
  4000000            & $3.274\mathrm{ms}$      & $32.141\mathrm{ms}$ & $9.8\times$ \\
  8000000            & $6.834\mathrm{ms}$      & $65.529\mathrm{ms}$ & $9.6\times$ \\               
  \bottomrule
  \end{tabular}
  \label{tab:benchdot}
\end{table}

\begin{table}[b!]
  \centering
  \caption{
    Performance benchmarking of matrix multiplication in double precision with and without stagnation detection on an Intel Ultra 7 165H CPU.
  }
  \begin{tabular}{llll}
  \toprule
  Dimensions                         & Duration  & Duration with stag. detec. & Slowdown     \\
  \midrule
  $(1000, 1000) \times (1000, 1000)$ & 778.868ms & 10.606s                            & $13.6\times$ \\
  $(1000, 2000) \times (2000, 1000)$ & 1.676s    & 21.117s                            & $12.6\times$ \\
  $(1000, 4000) \times (4000, 1000)$ & 3.607s    & 41.056s                            & $11.4\times$ \\
  $(2000, 1000) \times (1000, 2000)$ & 4.002s    & 42.322s                            & $10.6\times$ \\
  $(2000, 2000) \times (2000, 2000)$ & 8.106s    & 86.754s                            & $10.7\times$ \\
  $(2000, 4000) \times (4000, 2000)$ & 20.039s   & 179.167s                           & $8.9\times$  \\
  \bottomrule
  \end{tabular}
  \label{tab:benchmatmul}
\end{table}

Julia applies optimisations for mathematical functions on fundamental arithmetic types such as hardware floating point, which are not present for stagnation detection.
Therefore we have provided some arithmetic functions with these optimisations removed, such that the algorithms being implemented are the same regardless of if stagnation detection is being performed or not.
We have provided these functions for vector summation, dot product, matrix multiplication and classical and block LU factorisation both with partial pivoting.

For benchmarking purposes we generate random floating-point data by first using \texttt{rand()} to produce random unsigned integers of the appropriate size, then reinterpreting to floating point.
We use \texttt{BenchmarkTools: @benchmark} \cite{BenchmarkTools.jl-2016} in Julia for performing benchmarks, where the number of samples will decrease as the tests become more expensive, so the results especially for stagnation detection may not be noiseless.

Without stagnation detection, vector summation (Table \ref{tab:benchsum}) performs better than dot products (Table \ref{tab:benchdot}) but the performance for each is roughly the same when stagnation detection is active.
The functions we have written for these operations are extremely similar and modified from \texttt{LinearAlgebra: dot}, where the summation algorithm uses two-argument addition at each step while the dot product uses three-argument \texttt{muladd}.
Computing dot products can be seen to have a slowdown factor of approximately $9\times$ as seen in Table \ref{tab:benchdot}.
This increases significantly for matrix multiplication as shown in Table \ref{tab:benchmatmul}.

\subsubsection{Comparison to MPFR}

\begin{figure}[b!]
  \centering
  \begin{tikzpicture}
    \begin{axis}[
          title = {Floating-point vector sum performance},
          xlabel = {summation length},
          ylabel = {runtime (ms)},
          xmode = {log},
          ymode = {log},
          xtick = {1000000, 2000000, 4000000},
          ymax = {1500},
          enlarge y limits={0.1},
          legend pos = {north west},
          legend transposed = {true}
      ]

      \addplot[dotted, color = blue, thick]
      table[
          x = {n},
          y = {mp128},
          col sep = comma
      ]{benchsum.dat};
      \addlegendentry{MPFR ($p = 128$)}

      \addplot[dashed, color = blue, thick]
      table[
          x = {n},
          y = {stag},
          col sep = comma
      ]{benchsum.dat};
      \addlegendentry{fp64 stag. detec.}

      \addplot[solid, color = blue, thick]
      table[
          x = {n},
          y = {fp64},
          col sep = comma
      ]{benchsum.dat};
      \addlegendentry{fp64}

    \end{axis}
  \end{tikzpicture}
  \caption{
    Comparison of regular floating-point double precision recursive summation, summation with stagnation detection, and summation in $p=128$ precision using arbitrary precision with MPFR.
    The same unoptimised recursive summation algorithm was used in all computations.
  }
  \label{fig:mpfr}
\end{figure}

As a comparison of error measurement methods for floating point summation, we compare the computation of a vector sum with regular double precision arithmetic, double precision with stagnation detection, and $p=128$ arithmetic using MPFR arbitrary precision.
A plot is shown in Figure \ref{fig:mpfr}.
We can see that performing the sum with stagnation detection outperforms MPFR by a factor of approximately $10\times$, making it favourable for cases where stagnation can be a useful indicator of error.
It still provides significant overhead from regular floating point arithmetic, but provides a potentially favourable compromise between the performance and costs of the other two options shown.

\subsubsection{Storage cost}

\begin{table}[b!]
	\centering
	\caption{
		Sizes of Julia double precision scalars and arrays with and without stagnation detection.
	}
	\begin{tabular}{llll}
		\toprule
		Object                & Size      & Size with stagnation detection & Scale factor \\
		\midrule
		Scalar                & 8B        & 48B                            & $6\times$    \\
		Vector $N=100$        & 928B      & 5.71KiB                        & $6.3\times$  \\
		Vector $N=200$        & 1.62KiB   & 11.12KiB                       & $6.7\times$  \\
		Vector $N=400$        & 3.20KiB   & 22.06KiB                       & $6.9\times$  \\
		Vector $N = 1000000$  & 7.63MiB   & 53.41MiB                       & $7.0\times$  \\
		Matrix $(100, 100)$   & 78.21KiB  & 547.09KiB                      & $7.0\times$  \\
		Matrix $(200, 100)$   & 156.34KiB & 1.07MiB                        & $7.0\times$  \\
		Matrix $(400, 100)$   & 312.59KiB & 2.14MiB                        & $7.0\times$  \\
		Matrix $(1000, 1000)$ & 7.63MiB   & 53.41MiB                       & $7.0\times$  \\
		\bottomrule
	\end{tabular}
	\label{tab:sizes}
\end{table}

We have also shown the memory cost for stagnation detection in Table \ref{tab:sizes}.
All types belonging to \texttt{FloatSD} are structs with six fields, so we would expect a $6\times$ size increase for containers using \texttt{Float64SD} instead of \texttt{Float64} for example.
This is true for scalars but can be seen to grow for vectors and matrices.
It is worth noting that all the \texttt{FloatSD} types take up the same amount of space despite using different types for their \texttt{value} fields.
We believe this is because the Julia compiler allocates the system word size to every field in a struct.

\subsection{Alternative ways to measure stagnation}

\subsubsection{Consecutive sequences of absorptions}

\label{sssec:consecutive}

We considered several ways to define stagnation in a recursive summation before deciding on the current definition.
Previous to our work, the more commonly understood definition of stagnation applies to a recursive summation where every successive addition involves absorption.
We see no reason why the absorptions must be consecutive, thus we have relaxed the definition.
Here, we show examples measuring stagnation by recording the lengths of sequences of consecutive absorptions.
This is harder to implement in software, more computationally expensive, and does not provide more information than our actual approach.

When we state the \textit{longest stagnation}, this refers to a sequence of absorptions where all entries being absorbed are of the same sign.
If stagnation continues but the rounding direction is changed, this is considered to reset the sequence.
This is distinguished from our formal definition of stagnation, which itself is referred to as the \textit{number of absorptions}.

\begin{table}[b!]
	\centering
	\caption{
		Harmonic series computed in single precision, monitored with stagnation detection.
	}
	\begin{tabular}{llll}
		\toprule
		Number of terms & Final value of sum & Absorptions & Longest stagnation \\
		\midrule
    $1000000$ & $14.36$ & $0$ & $0$ \\
    $2000000$ & $15.31$ & $0$ & $0$ \\
    $3000000$ & $15.40$ & $902848$ & $902848$ \\
		$4000000$ & $15.40$ & $1902848$ & $1902848$ \\
		\bottomrule
	\end{tabular}
	\label{table:stagnation_harmonic}
\end{table}

When computing the harmonic series in finite precision, all the absorptions occur in a sequence at the end of the summation since the terms are decreasing.
Either implementation provides the same result for this problem.
See Table \ref{table:stagnation_harmonic} for the results.

\begin{table}[b!]
	\centering
	\caption{
		Summation of random single precision uniform $U[0, 1]$ values monitored with stagnation detection.
    Relative error is computed in reference to the sum in Julia's \texttt{Base.BigFloat} $256$-bit floating-point.
    For each $n$, data is produced randomly $100$ times and we take the (median) average of the results.
	}
	\begin{tabular}{llll}
		\toprule
		Number of terms & Absorptions & Longest stagnation & Sum rel. error \\
		\midrule
    $1000000$ & $10156$ & $3$ & $1.148\times 10^{-5}$ \\
    $2000000$ & $40672$ & $4$ & $1.398\times 10^{-5}$ \\
		$4000000$ & $162588$ & $5$ & $2.190\times 10^{-5}$ \\
		\bottomrule
	\end{tabular}
	\label{table:stagnation_uniform}
\end{table}

We also consider summation of random, uniformly-distributed values.
We measure stagnation in both ways we have considered, as well as the relative error of the summation.
A comparison is shown in Table \ref{table:stagnation_uniform}.
Observe that for $n = 10^6$, there are more than $10^5$ absorptions but the longest sequence of stagnation is length $3$.
If we were to only measure the longest sequence, we would be provided with much less information about the absorptions occurring.
Furthermore, the number of overall absorptions grows at a greater rate than the other metrics.


\begin{table}[b!]
	\centering
	\caption{
    Series expansion of $\mathrm{e}^{-2\pi}$ evaluated up to $64$ terms, evaluated in single precision in different orderings.
	}
	\begin{tabular}{lllll}
		\toprule
    Ordering & Absorptions down & Absorptions up & Longest stagnation & Sum rel. error \\
		\midrule
    default & $15$ & $15$ & $1$ & $2.548 \times 10^{-4}$ \\
    ascending & $0$ & $0$ & $0$ & $1.169 \times 10^{-3}$ \\
		descending & $15$ & $15$ & $1$ & $2.309\times 10^{-8}$ \\
		\bottomrule
	\end{tabular}
	\label{table:stagnation_exponential}
\end{table}

Higham~\cite{high93s} mentions one example of computing the series expansion of $\mathrm{e}^x$ about $0$ at $x = -2\pi$ by evaluating and summing the first $64$ terms in the series.
We measure the stagnation and the relative error of the summation, showing the results in Table \ref{table:stagnation_exponential}.
The result that descending ordering provides the most favourable error is replicated from the original work.
The data for stagnation, regardless of measurement, is the same for default and descending ordering despite the relative error being significantly different in both.
The error changes in descending ordering because the terms being absorbed are different.
Ascending ordering reports no stagnation, but the relative error is the greatest.
Stagnation is inappropriate for analysis of this problem, since the error culprit is catastrophic cancellation.

\subsubsection{Combination of addends}


Our stagnation detection tool records information from addition operations into a single accumulator.
If multiple variables of stagnation detection types are added together, their data will not be combined.
Any non-addition operation and any non-linear function returns a value with all the stagnation detection information reset.
We have considered alternative functionality, where information may be combined in addition, multiplication and division operations, and functions preserve stagnation data in general.
For vector sums and dot products, there is formally a single accumulator and these principles make no difference to the results.
However, for sums of sums and any extension of this form, such as multiple matrix multiplications, the contributions aggregate.
There is opportunity to argue which method is more rigorously justified in giving appropriate information on stagnation, as our implementation does not give a comprehensive history.
However it could also be said that by combining information from arbitrarily many operands, stagnation data can become cumbersome and meaningless to a user.
This is open to future analysis and discussion.


\section{Conclusion}


Our experiments with stagnation detection in Julia show that when performing recursive summation on single sign data
with a growing accumulator, there is a relationship between stagnation and relative error.
In these cases, we argue for the utility of stagnation detection as an alternative over other methods of dynamic floating point behaviour tracking and error detection.
We have provided a formal definition for stagnation and applied stagnation detection to a selection of example computations.
We have analysed the information provided and discussed the performance cost of stagnation detection.
Our software tool \texttt{StagnationDetection.jl} offers a simple way to implement stagnation detection in Julia programs, and we have provided extensive examples on usage.

Our definition of stagnation differs from how it is often informally understood, and we have briefly considered alternative definitions.
In future, it may be worth considering stagnation to apply strictly to consecutive operations, or to be defined on sequences of operations more complex than recursive summation.
Different approaches to defining and measuring stagnation could have different implications for the effectiveness of stagnation detection as a dynamic analysis tool and the complexity of its implementation.

Future work could explore stagnation detection applied to other problems we have not explored.
Iterative algorithms for solving linear systems, such as the conjugate gradient method, involve sequences of additions that could be monitored to detect stagnation.
Specifically for the conjugate gradient method, the most complex operation per step is matrix-vector multiplication, so implementing any manner of stagnation detection would be less expensive than it is for matrix-matrix multiplication, which we have provided benchmarks for.
Furthermore, while we have demonstrated results with \texttt{ShallowWaters.jl}, this is one example of a time integration and other problems solved with the same algorithm could exhibit more significant stagnation.
Further work towards improving the performance of stagnation detection, particularly for matrix arithmetic, may be worth considering.

\section*{Funding}

Mikaitis acknowledges funding by the Engineering and Physical Sciences Research Council (EPSRC) grant “Informing Future Numerical Standards by Determining Features of Non-Standard Mathematical Hardware”, ref. UKRI151.

\bibliography{references,njhigham,strings}

\end{document}